\documentclass[11pt,a4paper]{article}
\usepackage[T1]{fontenc}
\usepackage{lmodern}
\usepackage[margin=27mm]{geometry}
\usepackage{amsmath,amsthm}
\usepackage[hidelinks]{hyperref}
\newtheorem{theorem}{Theorem}[section]
\newtheorem{lemma}[theorem]{Lemma}
\newtheorem{corollary}[theorem]{Corollary}
\newtheorem{proposition}[theorem]{Proposition}
\newcommand{\email}[1]{\texttt{#1}}
\newcommand{\headers}[2]{\markboth{#2}{#1}}
\newenvironment{keywords}{\par\medskip\noindent\textbf{Keywords:}\ }{\par}
\newenvironment{AMS}{\par\medskip\noindent\textbf{2020 Mathematics Subject Classification:}\ }{\par}
\usepackage{amssymb}
\numberwithin{equation}{section}
\newcommand{\G}{\mathbf{G}}
\renewcommand{\phi}{\varphi}
\title{Local Fan-type obstructions and Ore-type conditions for rainbow Hamiltonicity in graph systems}
\author{Ailian Chen\and Hao Chen\thanks{Both authors are with the School of Mathematics and Statistics, Fuzhou University, Fuzhou 350108, China. Ailian Chen: \email{elian1425@fzu.edu.cn} (corresponding author); Hao Chen: \email{1577761811@qq.com}.}}
\headers{Local conditions for rainbow Hamiltonicity}{Ailian Chen and Hao Chen}

\hypersetup{
  pdftitle={Local Fan-type obstructions and Ore-type conditions for rainbow Hamiltonicity in graph systems},
  pdfauthor={Ailian Chen and Hao Chen},
  pdfsubject={Local rainbow Hamiltonicity conditions in graph systems},
  pdfkeywords={graph system, rainbow Hamilton cycle, rainbow distance, Fan-type condition, Ore-type condition, rotation-extension}
}

\date{22 October 2026}

\begin{document}
\maketitle

\begin{abstract}
Let $\G=(G_1,\ldots,G_n)$ be a graph system on a common $n$-vertex set,
with system degree $\delta(v)=\min_i d_{G_i}(v)$, common edge set
$E_{\cap}=\bigcap_iE(G_i)$, and rainbow distance $d^c$.
Suppose every member is $2$-connected. Under the local Fan condition
that $\delta(u),\delta(v)<n/2$ and $d^c(u,v)\le2$ imply
$uv\in E_{\cap}$, every system without a rainbow Hamilton cycle has
$E_{\cap}=\varnothing$ and $2S+\Gamma\le n^2-1$, where
$S=\sum_v\delta(v)$ and, for the sorted system degrees
$a_1\ge\cdots\ge a_n$ and $\Delta=a_1$,
$\Gamma=\sum_{j=2}^{\Delta+1}(\Delta-a_j)$.
Thus $2S+\Gamma\ge n^2$, or the presence of a common edge, guarantees
rainbow Hamiltonicity. An explicit infinite family meets the refined
criterion below the earlier aggregate threshold and has no common edge.
The local Ore implication with threshold $n$
also guarantees a rainbow Hamilton cycle. At threshold $n-1$, we prove
the existence of a rainbow cycle of length at least $n-1$ and a
prescribed-set closure theorem for near-spanning paths with endpoint
degree sum at least $n-1$. Complete bipartite examples show that the
local thresholds $n$ for Hamiltonicity and $n-1$ for the long-cycle
conclusion cannot be lowered uniformly. The proofs combine a
shared rotation lemma, critical neighborhood rigidity, and an
auxiliary-digraph count that retains degree and color surpluses.
\end{abstract}

\begin{keywords}
graph system, rainbow Hamilton cycle, rainbow distance, Fan-type condition,
Ore-type condition, rotation--extension
\end{keywords}

\begin{AMS}
05C38, 05C35
\end{AMS}

\section{Introduction}

All graphs are finite, simple, and undirected. A \emph{graph system} is an
indexed family $\G=(G_1,\ldots,G_m)$ of graphs on a common vertex set $V$;
we interpret index $i$ as color $i$. A graph $H$ with $V(H)\subseteq V$ and
$|E(H)|\le m$ is \emph{rainbow} in $\G$ if its edges can be assigned distinct
colors $i\in[m]$ so that every edge of color $i$ lies in $G_i$. If
$|E(H)|=m$, then $H$ is a $\G$-transversal. We write
$[n]=\{1,\ldots,n\}$.

Fan's theorem \cite{Fan1984} localizes Dirac's degree condition: every
$2$-connected graph $G$ of order $n$ is Hamiltonian whenever
\[
  \max\{d_G(u),d_G(v)\}\ge \frac n2
\]
for each pair $u,v$ at distance two. Local Ore-type conditions provide a
parallel classical theory; see
\cite{HasratianKhachatrian1990,AsratianBroersmaHeuvelVeldman1996,
AsratianSarkisian1996}. This raises a natural question for graph systems:
does an analogous localization remain valid when ordinary distance is
replaced by rainbow distance?

Aharoni, DeVos, Gonz\'alez Hermosillo de la Maza, Montejano and
\v{S}\'amal \cite{Aharoni2020} posed the rainbow Dirac conjecture for graph
systems. Joos and Kim \cite{Joos2020} subsequently proved the exact result:
if $\delta(G_i)\ge n/2$ for every $i\in[n]$, then any system of $n$ graphs on
$n$ vertices contains a rainbow Hamilton cycle. Cheng, Wang and Zhao
\cite{Cheng2021} established an asymptotic version and obtained further
cycle-spectrum results. Related developments include stability results for
transversal Hamilton cycles and paths \cite{ChengStaden2025}, exact near-Dirac
results for Hamilton paths and cycles \cite{ChengSunWangWei2026}, and the
survey of Sun, Wang and Wei \cite{SunWangWeiSurvey2026}. Li, Li and Li
\cite{LiLiLi2023} studied rainbow Hamilton paths, vertex-pancyclicity, and
panconnectivity under degree conditions, and identified rainbow analogues of
classical Hamiltonicity criteria, including Fan-type conditions, as a natural
direction for further study.

Rainbow distance also appears in recent work on path spectra. For vertices
$x,y$, the lower end of the rainbow panconnectivity range is governed by the
length of a shortest rainbow $x$--$y$ path; see Li, Li and Li
\cite{LiLiLi2023}, Sun, Wang and Wei \cite{SunWangWei2025Panconnected}, and
Ma, You and Zhang \cite{MaYouZhang2026}. In the present paper, rainbow
distance serves a different purpose: it determines the pairs on which the
Hamiltonicity condition is imposed, rather than the range of path lengths
required between a fixed pair.

From now on let $\G=(G_1,\ldots,G_n)$ be a system of $n$ graphs on a common
$n$-vertex set $V$, and define the \emph{system degree}
\[
  \delta(v)=\min_{i\in[n]}d_{G_i}(v),
  \qquad
  E_{\cap}=\bigcap_{i=1}^nE(G_i).
\]
The \emph{rainbow distance} $d^c(u,v)$ is the minimum length of a rainbow
$u$--$v$ path, with $d^c(u,v)=\infty$ if no such path exists. We consider the
local Fan-type condition
\begin{equation*}
  \delta(u),\delta(v)<\frac n2,
  \qquad d^c(u,v)\le2
  \quad\Longrightarrow\quad
  uv\in E_{\cap}.
  \tag{RF}\label{eq:rainbow-fan-condition}
\end{equation*}
The distance-one case is specific to graph systems: an edge contained in a
single member already has rainbow distance one, whereas \textup{(RF)} forces
an edge joining two low-system-degree vertices to be common to all members.
The appearance of $E_{\cap}$ is natural in rotation--extension arguments: a
common edge may receive any available color and can therefore absorb a color
released when a path edge is deleted. In the diagonal case
$G_1=\cdots=G_n=G$, rainbow distance is ordinary graph distance and
$E_{\cap}=E(G)$, so \textup{(RF)} reduces exactly to Fan's condition.

Write the system degrees in nonincreasing order as
\[
 a_1\ge a_2\ge\cdots\ge a_n,\qquad
 \Delta=a_1,\qquad S=\sum_{j=1}^n a_j,
\]
and define the nonnegative degree-sequence correction
\begin{equation}\label{eq:gamma-definition}
 \Gamma=\sum_{j=2}^{\Delta+1}(\Delta-a_j)
        =\Delta^2-\sum_{j=2}^{\Delta+1}a_j.
\end{equation}
The index range is valid because $\Delta\le n-1$.

Our first result refines the aggregate bound by retaining the degree
differences discarded in the auxiliary-digraph count. It also identifies
the common-edge obstruction left by \textup{(RF)}.

\begin{theorem}[Structural obstruction]\label{thm:structural}
Let $n\ge3$, and let $\G=(G_1,\ldots,G_n)$ be a graph system on a common
$n$-vertex set $V$. Suppose that every $G_i$ is $2$-connected, that $\G$
satisfies \textup{(RF)}, and that $\G$ contains no rainbow Hamilton cycle.
Then
\[
  E_{\cap}=\varnothing
  \qquad\text{and}\qquad
  2S+\Gamma\le n^2-1.
\]
\end{theorem}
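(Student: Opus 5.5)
We argue by extremality within the standard rotation--extension framework for graph systems. Among all rainbow paths in $\G$, choose a longest one; among these, choose a rainbow path $P=v_1v_2\cdots v_\ell$ with colors $c_1,\ldots,c_{\ell-1}$ that maximizes $\delta(v_1)+\delta(v_\ell)$. Let $C_P=[n]\setminus\{c_1,\ldots,c_{\ell-1}\}$ be the set of unused colors, which is nonempty.

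\textbf{Step 1: the path is near-spanning.} We first show $\ell\ge n-1$. A rainbow cycle on $V(P)$ that still has a spare color would, by $2$-connectivity of the members, extend to a longer rainbow path, contradicting maximality. So the only remaining issue is closing $P$ into a cycle. For each spare color $c\in C_P$, the neighbors of $v_1$ in $G_c$ lie on $P$. The standard rotation (P\'osa) step then applies: if $v_1v_{i+1}\in G_c$, we delete the edge $v_iv_{i+1}$ and release its color $c_i$. This produces a new longest rainbow path with endpoint $v_i$ and spare color set $(C_P\setminus\{c\})\cup\{c_i\}$.

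\textbf{Step 2: common edges are impossible.} We show that a common edge $xy\in E_\cap$ leads to a rainbow Hamilton cycle. Because a common edge can be given whatever color is free, it absorbs the color released by a rotation. Concretely, I would run the shared rotation lemma with the edge $xy$ held as a flexible edge: either $xy$ lies on a closable configuration, or it supplies the link needed to close a Hamilton path into a cycle. Condition (RF) enters through pairs of low-degree endpoints. Two such endpoints produced by rotations are at rainbow distance at most $2$, since they are joined through $v_2$ using two distinct colors. Hence (RF) forces them to be adjacent by a common edge, and that edge closes the cycle. This step yields $E_\cap=\varnothing$. Consequently, (RF) forbids any two vertices of system degree below $n/2$ from having $d^c\le2$.

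\textbf{Step 3: counting.} We build the auxiliary digraph $D$ on $V$. Since no rainbow Hamilton cycle exists, a Hamilton path $P$ exists after Step 1; if $\ell=n-1$ we handle the leftover vertex separately. We put arcs $v_i\to v_{i+1}$ and $v_1\to v_{i+1}$ whenever $v_1v_{i+1}\in G_c$ for some spare color, and similarly from $v_n$. Non-Hamiltonicity forbids the crossing pattern $v_1v_{i+1}\in G_c$, $v_iv_n\in G_{c'}$ with $c\ne c'$ suitably chosen. Summed over rotations, this gives for each rotated endpoint pair $(u,w)$ the bound
\[
  \delta(u)+\delta(w)\le n-1 .
\]
The refinement over the aggregate bound comes from summing this inequality over all $n$ endpoint classes reachable by rotation. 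Rather than bounding each vertex by $\Delta$, we keep the exact surplus $\Delta-a_j$ for the $\Delta$ vertices adjacent to a maximum-degree vertex. These are exactly the $\Delta$ rotation images obtained from an endpoint of degree $\Delta$, and they contribute $\sum_{j=2}^{\Delta+1}(\Delta-a_j)=\Gamma$. In addition, double counting the arcs of $D$, with every vertex appearing as an endpoint in two directions, yields
\[
  2S+\Gamma\le n(n-1)+(n-1)=n^2-1 .
\]

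\textbf{Main obstacle.} I expect the hardest part to be Step 3, specifically making the $\Gamma$ surplus legitimate. The difficulty is proving that the rotation images of a maximum-degree endpoint really realize $\Delta$ distinct vertices whose degrees can be compared with $a_2,\ldots,a_{\Delta+1}$, and that the color bookkeeping does not lose one unit in the process. I would try first to establish critical neighborhood rigidity. This would mean showing that equality-type configurations force $N_{G_c}(v_1)$ to be the same set for every spare color $c$. The released-color shifts would then match the degree ordering, and the count could be checked against the sorted sequence $a_2\ge\cdots\ge a_{\Delta+1}$.
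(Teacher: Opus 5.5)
There is a genuine gap, and it sits in the inequality your Step 3 rests on: that every pair $(u,w)$ of endpoints of a rotated rainbow Hamilton path satisfies $\delta(u)+\delta(w)\le n-1$. A rainbow Hamilton path in a system of $n$ graphs has only one missing color $r$. In the crossing argument, deleting $v_iv_{i+1}$ releases its color $c_i$. So the forbidden pattern is $v_1v_{i+1}\in G_r$ together with $v_iv_n\in G_{c_i}$ (or with $r$ and $c_i$ exchanged). You have no freedom to choose $c'$. The set $\{i:v_iv_n\in G_{c_i}\}$ mixes colors, so it is not bounded below by $\delta(v_n)$.

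The paper obtains your inequality (Lemma~\ref{lem2.1}) only for paths containing a common edge, which can absorb the released color. Your own Step 2 has just concluded $E_\cap=\varnothing$, so the summands of your double count are unavailable. The ``endpoint classes reachable by rotation'' are also never specified. The final display is not produced by the count either. If $n$ such pair bounds covered each vertex twice, they would give $2S\le n(n-1)$ outright. The added $(n-1)$, and the identification of $\Gamma$ with ``rotation images of a maximum-degree endpoint,'' are asserted rather than derived.

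The missing mechanism is a count along a fixed rainbow $(n-1)$-cycle $C=v_1\cdots v_{n-1}v_1$, colored $v_iv_{i+1}\mapsto i$ with color $n$ missing. Its omitted vertex $u$ must satisfy $\delta(u)=\Delta\ge n/2$; this is exactly the leftover vertex you propose to ``handle separately.'' Securing such a $u$ is most of the work. It needs Theorem~\ref{thm2.4}, which rests on the trapped-endpoint Lemma~\ref{lem:trapped}; your Step 1 sketch does not replace that lemma. It also needs the separation Claims 1--4, Claim 5 ($\delta(u)\ge n/2$), and Claim 6 (every vertex is omitted by some rainbow $(n-1)$-cycle).

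These claims also give Step 2 in the correct order. Separation of low pairs comes first, from Claim 4, and is not a consequence of $E_\cap=\varnothing$. A common edge $xy$ with $\delta(x)\ge n/2$ is then ruled out as follows. Take a rainbow cycle on $V\setminus\{x\}$ through $y$, and apply Lemma~\ref{lem2.1} to the two Hamilton paths starting with $xy$. This makes both cycle-neighbours of $y$ low, and they are joined by a rainbow two-path, contradicting separation.

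For the count, put an arc $v_i\to z$ whenever $v_iz\in G_i$ and $z\ne v_{i+1}$. Each arc then carries the color of the cycle edge it would replace, which is what gets around the single-missing-color problem. Out-degrees give $|A(D-u)|\ge S-2(n-1)+(d-\Delta)+R$, where $d=d_{G_n}(u)$ and $R\ge0$ is a color-degree surplus. In-degrees at $v_{i-1}$ are bounded by crossing arguments: on the path $uv_i\cdots v_{i-1}$ when $v_i\in N_{G_n}(u)$, and on a path rerouted through $u$ otherwise. These give $|A(D-u)|\le(n-1)^2-S-(d-\Delta)-T-E$, with $E\ge0$. The correction $\Gamma$ comes from $T=\sum_{v_i\in N_{G_n}(u)}(\Delta-\delta(v_i))$, using $|N_{G_n}(u)|\ge\Delta$ and $u\notin N_{G_n}(u)$.
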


In particular, $S\le(n^2-1)/2$. The stronger degree-sequence
consequence is the following.

\begin{corollary}[Degree-sequence Hamiltonicity]\label{cor:degree-sequence}
Suppose that every $G_i$ is $2$-connected and that $\G$ satisfies
\textup{(RF)}. If
\[
 2S+\Gamma\ge n^2,
\]
then $\G$ contains a rainbow Hamilton cycle.
Equivalently, it suffices that $S\ge\lceil(n^2-\Gamma)/2\rceil$.
\end{corollary}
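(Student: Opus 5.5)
The corollary follows from Theorem~\ref{thm:structural} by contraposition, so the plan is a short deduction. We assume that every $G_i$ is $2$-connected, that $\G$ satisfies \textup{(RF)}, and that $2S+\Gamma\ge n^2$. Suppose, for contradiction, that $\G$ has no rainbow Hamilton cycle.

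First, the corollary does not state $n\ge3$, so we settle small $n$. A $2$-connected graph has at least three vertices, so the hypothesis that every $G_i$ is $2$-connected already forces $n\ge3$. We therefore fall within the scope of Theorem~\ref{thm:structural}. Applying it gives $E_{\cap}=\varnothing$ and $2S+\Gamma\le n^2-1<n^2$. This contradicts the assumption $2S+\Gamma\ge n^2$, so $\G$ contains a rainbow Hamilton cycle.

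For the equivalent form, note that $S$, $\Gamma$ and $n$ are all integers. Hence
\[
2S+\Gamma\ge n^2 \iff S\ge \frac{n^2-\Gamma}{2} \iff S\ge \left\lceil \frac{n^2-\Gamma}{2}\right\rceil.
\]
The second equivalence holds because $S$ is an integer. Thus the two stated sufficient conditions coincide.

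There is no real obstacle here; all the difficulty lies in Theorem~\ref{thm:structural}, which we assume. The only points that need care are the implicit requirement $n\ge3$, which the $2$-connectivity hypothesis supplies, and the integrality used in the ceiling reformulation.
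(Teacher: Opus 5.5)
Your proposal is correct and follows the paper's route: the corollary is stated there as a direct consequence of Theorem~\ref{thm:structural}, which is exactly your contrapositive argument. The remaining points are also handled correctly: $2$-connectivity gives $n\ge3$, and the ceiling form is equivalent because $S$ is an integer.
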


Proposition~\ref{prop:strict-family} constructs, for every even $n\ge10$,
systems with $E_{\cap}=\varnothing$ that meet the new inequality at its
integer threshold while falling $\lfloor n/4\rfloor$ below the old
aggregate threshold. Thus the stronger condition has realizable cases
outside both earlier sufficient conditions.

The correction vanishes exactly when the largest $\Delta+1$ system
degrees are all $\Delta$. More simply, if the maximum system degree has
multiplicity $q$, then
\[
 \Gamma\ge\max\{0,\Delta+1-q\}.
\]
Thus $2S+\max\{0,\Delta+1-q\}\ge n^2$ is another sufficient condition.
Since $\Gamma\ge0$, we also recover the original scalar threshold.

\begin{corollary}[Aggregate-degree Hamiltonicity]\label{cor:total-degree-ham}
Let $\G=(G_1,\ldots,G_n)$ be a graph system on a common $n$-vertex set.
Suppose that every $G_i$ is $2$-connected and that $\G$ satisfies
\textup{(RF)}. If
\[
  \sum_{v\in V}\delta(v)\ge\left\lceil\frac{n^2}{2}\right\rceil,
\]
then $\G$ contains a rainbow Hamilton cycle.
\end{corollary}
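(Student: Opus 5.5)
This corollary follows from Theorem~\ref{thm:structural} by contraposition; the only extra input is the sign of $\Gamma$. Suppose, for contradiction, that $\G$ satisfies the hypotheses and $\sum_{v}\delta(v)\ge\lceil n^2/2\rceil$, but contains no rainbow Hamilton cycle. The hypotheses of Theorem~\ref{thm:structural} hold: $n\ge3$ is implicit, because $2$-connected graphs on $n$ vertices require $n\ge3$. The theorem therefore gives
\[
 2S+\Gamma\le n^2-1 .
\]

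Next I check that $\Gamma\ge0$. In \eqref{eq:gamma-definition}, each summand $\Delta-a_j$ is nonnegative, since $\Delta=a_1$ is the maximum system degree. The index range $2\le j\le\Delta+1\le n$ is legitimate, as noted after the definition. Hence
\[
 2S\le 2S+\Gamma\le n^2-1 .
\]

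On the other hand, $S\ge\lceil n^2/2\rceil$ gives $2S\ge 2\lceil n^2/2\rceil\ge n^2$, contradicting $2S\le n^2-1$. Equivalently, the assumption gives $2S+\Gamma\ge n^2$, so Corollary~\ref{cor:degree-sequence} applies directly.

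There is no substantive obstacle. All the difficulty lies in Theorem~\ref{thm:structural}, which I take as given. The only care needed is with the integer rounding: $2\lceil n^2/2\rceil\ge n^2$ for both parities of $n$, with equality when $n$ is even and value $n^2+1$ when $n$ is odd. Either way this beats the bound $n^2-1$.
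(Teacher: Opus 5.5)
Your proposal is correct and matches the paper's own derivation: Theorem~\ref{thm:structural} gives $2S+\Gamma\le n^2-1$ for a non-Hamiltonian system, and $\Gamma\ge0$ then yields $2S\le n^2-1<n^2\le 2\lceil n^2/2\rceil$. Your remark that $n\ge3$ is forced by $2$-connectivity (minimum degree two) also agrees with the convention the paper uses.
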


The common-edge conclusion gives another corollary.

\begin{corollary}[Common-edge Hamiltonicity]\label{cor:common-edge-ham}
Let $\G=(G_1,\ldots,G_n)$ be as above. If $E_{\cap}\ne\varnothing$, then
$\G$ contains a rainbow Hamilton cycle.
\end{corollary}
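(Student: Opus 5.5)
The plan is to derive Corollary~\ref{cor:common-edge-ham} directly from Theorem~\ref{thm:structural} by contraposition, without any new combinatorial argument. I read ``as above'' as carrying over the standing hypotheses of Corollary~\ref{cor:total-degree-ham}: $n\ge3$, every $G_i$ is $2$-connected, and $\G$ satisfies \textup{(RF)}. No aggregate-degree assumption is imposed.

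Under these hypotheses, suppose for contradiction that $\G$ has no rainbow Hamilton cycle. Then every hypothesis of Theorem~\ref{thm:structural} holds, and its first conclusion gives $E_{\cap}=\varnothing$. This contradicts the assumption $E_{\cap}\ne\varnothing$. Hence a rainbow Hamilton cycle exists. The degree inequality $2S+\Gamma\le n^2-1$ of the theorem is not needed here. Only the common-edge part of the structural obstruction is used.

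The only point requiring care is the scope of ``as above''. If it were read as the bare setting of a graph system on $n$ vertices, the statement would be false. For example, take all $G_i$ equal to a graph with a pendant vertex: this has common edges but no Hamilton cycle. So $2$-connectivity and \textup{(RF)} must be part of the hypotheses. I would state this explicitly in the proof and then cite Theorem~\ref{thm:structural}. All the real difficulty, namely showing that a common edge can absorb a released color in the rotation--extension argument, is contained in the proof of that theorem and is assumed here.
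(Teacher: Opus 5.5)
Your proposal is correct and matches the paper's intended derivation. The corollary is the contrapositive of the first conclusion $E_{\cap}=\varnothing$ of Theorem~\ref{thm:structural}, under the hypotheses carried over from Corollary~\ref{cor:total-degree-ham} ($2$-connectivity of every member and \textup{(RF)}), which is exactly how you read ``as above.''
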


The common-edge corollary also shows that our structural theorem genuinely
extends the classical Fan theorem. Indeed, for a diagonal system
$G_1=\cdots=G_n=G$, condition \textup{(RF)} is precisely Fan's condition and
$E_{\cap}=E(G)$. Since a $2$-connected graph has an edge,
Corollary~\ref{cor:common-edge-ham} immediately recovers Fan's theorem.

These two corollaries identify two immediate ways to rule out the structural
obstruction. We return in Section~\ref{sec:concluding} to the open question of
whether \textup{(RF)}, together with $2$-connectivity of every member, already
forces a rainbow Hamilton cycle.

For graph systems, Liu, Chen and Ma \cite{LiuChenMa2025} proved a transversal
Hamiltonicity theorem under a cross-layer Ore-type degree-sum condition. In
our notation, their result in particular yields Hamiltonicity under the global
condition
\begin{equation}
  \delta(u)+\delta(v)<n
  \quad\Longrightarrow\quad
  uv\in E_{\cap}
  \qquad(u\ne v).
  \label{eq:global-ore}
\end{equation}
Li and Luo \cite{LiLuo2025} obtained a Hamiltonian-connected analogue under a
layerwise Ore condition, while Li, Wang and Yan \cite{LiWangYan2026} proved
pancyclic consequences under a global Ore-type hypothesis. For Hamiltonicity,
the implication \eqref{eq:global-ore} need only be imposed on rainbow-close
pairs.

\begin{theorem}[Local Ore-type condition]\label{thm:local-ore}
Let $n\ge3$, and let $\G=(G_1,\ldots,G_n)$ be a graph system on a common
$n$-vertex set $V$. Suppose that every $G_i$ is $2$-connected and that, for
every pair of distinct vertices $u,v\in V$,
\[
  \delta(u)+\delta(v)<n,
  \qquad d^c(u,v)\le2
  \quad\Longrightarrow\quad
  uv\in E_{\cap}.
\]
Then $\G$ contains a rainbow Hamilton cycle.
\end{theorem}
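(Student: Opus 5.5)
Since Theorem~\ref{thm:structural} and Corollary~\ref{cor:common-edge-ham} are available, the plan is to argue by contradiction and reduce to a common-edge situation. Suppose $\G$ satisfies the local Ore hypothesis, every $G_i$ is $2$-connected, and $\G$ has no rainbow Hamilton cycle. By Corollary~\ref{cor:common-edge-ham} we get $E_{\cap}=\varnothing$. The local Ore hypothesis then says that every pair $u\ne v$ with $d^c(u,v)\le 2$ satisfies $\delta(u)+\delta(v)\ge n$. In particular this holds for every pair $u,v$ adjacent in some $G_i$, since such a pair has rainbow distance one.

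The main step is to show that this forces a common edge, or else contradicts the counting in Theorem~\ref{thm:structural}. I would first check whether the local Ore hypothesis implies (RF). Take $\delta(u),\delta(v)<n/2$ with $d^c(u,v)\le2$. Then $\delta(u)+\delta(v)<n$, so the hypothesis gives $uv\in E_{\cap}$. Hence (RF) holds, and Theorem~\ref{thm:structural} applies, giving $E_{\cap}=\varnothing$ and $2S+\Gamma\le n^2-1$. It then remains to contradict this inequality using only the Ore information. Since $E_{\cap}=\varnothing$, no pair at rainbow distance at most two can have degree sum below $n$.

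Next fix a member $G_1$. It is $2$-connected, so it has an edge $uv$, and therefore $\delta(u)+\delta(v)\ge n$. To bound $S$ from below, I would sum $\delta(x)+\delta(y)$ over the edges $xy$ of $G_1$. This gives $\sum_{xy\in E(G_1)}(\delta(x)+\delta(y))\ge n|E(G_1)|$. The left side equals $\sum_x d_{G_1}(x)\delta(x)$. A Cauchy--Schwarz or rearrangement argument should then turn this into $\sum_x\delta(x)^2$-type control, using $\delta(x)\le d_{G_1}(x)$.

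The expected obstacle is that this weighted inequality does not directly give $2S\ge n^2$. A low-degree vertex adjacent only to high-degree vertices is allowed. So I would instead use the full strength of rainbow distance two. If $\delta(u)<n/2$, every vertex reachable from $u$ by a rainbow path of length at most $2$ has system degree greater than $n/2$. This is strong, because almost every $2$-path using two different colours is rainbow. If Theorem~\ref{thm:structural}'s count turns out too weak, the fallback is to rerun the paper's rotation--extension argument (the shared rotation lemma) directly. Take a longest rainbow path $P=x_1\cdots x_k$ with a spare colour. By maximality, the neighbours of $x_1$ and $x_k$ lie on $P$, and the classical Ore crossing argument gives a rainbow cycle on $V(P)$. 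Here $d^c(x_1,x_k)\le 2$ is witnessed through a common neighbour, which lets the hypothesis apply.

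The crux of this fallback is obtaining $\delta(x_1)+\delta(x_k)\ge n$. I would handle it by choosing, among all longest rainbow paths, one whose endpoints are at rainbow distance at most $2$. Such a path should exist because of rotations: if $x_1x_{i+1}\in G_c$ and $x_kx_i\in G_{c'}$, the crossing produces a cycle. Otherwise the pigeonhole count $|N(x_1)|+|N^-(x_k)|\ge n>k-1$ forces a crossing, and $2$-connectivity extends the cycle to a longer path, a contradiction.
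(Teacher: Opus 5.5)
\textbf{There is a genuine gap.} The reduction is correct. The local Ore hypothesis implies \textup{(RF)}, so Theorem~\ref{thm:structural} gives $E_{\cap}=\varnothing$ and $2S+\Gamma\le n^2-1$. Consequently every pair at rainbow distance at most two satisfies $\delta(u)+\delta(v)\ge n$. But you never derive $2S\ge n^2$, or any other contradiction.

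Summing over $E(G_1)$ weights each vertex by $d_{G_1}(x)$ and, as you concede, does not give the bound. The fallback does not repair this, for three reasons:
\begin{itemize}
\item The pigeonhole count $|N(x_1)|+|N^-(x_k)|\ge n$ already presupposes $\delta(x_1)+\delta(x_k)\ge n$. That is exactly what the ``rainbow-close endpoints'' choice was meant to secure, so the argument is circular.
\item The common neighbour of $x_1$ and $x_k$ is asserted without justification.
\item The classical crossing argument does not close a rainbow Hamilton path when only one colour $r$ is missing. After deleting $x_ix_{i+1}$, the second chord must carry that edge's colour $c_i$, which varies with $i$. So the index set $\{i:x_ix_n\in E(G_{c_i})\}$ is not controlled by $\delta(x_n)$. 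This is precisely why Lemma~\ref{lem2.1} needs a common edge to release a second fixed colour, and you have just shown $E_{\cap}=\varnothing$.
\end{itemize}

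\textbf{The missing idea} is that the Ore inequalities should be summed along a spanning \emph{closed} sequence of rainbow-close pairs. Theorem~\ref{thm2.4}, which applies since \textup{(RF)} holds, supplies one.

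Take a rainbow $(n-1)$-cycle $C=v_1\cdots v_{n-1}v_1$ in colours $1,\ldots,n-1$, and let $v_n$ be the omitted vertex. Since $G_n$ is connected, after reindexing we may assume $v_nv_1\in E(G_n)$. Put $v_{n+1}=v_1$. Then $d^c(v_i,v_{i+1})\le 2$ for every $i\in[n]$:
\begin{itemize}
\item for $i\le n-2$ this is a cycle edge;
\item for $i=n-1$ the path $v_{n-1}v_1v_n$ is rainbow, in colours $n-1$ and $n$;
\item for $i=n$ it is the $G_n$-edge $v_nv_1$.
\end{itemize}
Because $E_{\cap}=\varnothing$, the hypothesis gives $\delta(v_i)+\delta(v_{i+1})\ge n$ for all $n$ of these pairs. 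Summing yields $2S\ge n^2$, which contradicts $2S\le 2S+\Gamma\le n^2-1$.

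Note that $v_nv_1\cdots v_{n-1}$ is exactly the rainbow Hamilton path with rainbow-close endpoints that your fallback was looking for. Once you have it, no crossing argument is needed, only the sum. The closing pair is essential: a rainbow Hamilton path alone (Theorem~\ref{thm2.3}) gives only $2S\ge n^2-n+\delta(x_1)+\delta(x_n)$.
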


In the diagonal case the theorem becomes the classical local Ore requirement
\[
  d_G(u)+d_G(v)\ge n
  \qquad\text{whenever }d_G(u,v)=2.
\]
This localization is strictly weaker than the global condition. Let $n\ge6$,
write $W=A\sqcup B$ with $|W|=n-2$ and $|A|,|B|\ge2$, let $W$ induce
$K_{n-2}$, and add vertices $x,y$ so that $N(x)=A$ and $N(y)=B$. Taking every
member of the system equal to this graph gives a $2$-connected diagonal
system. The pair $x,y$ has distance three and $d(x)+d(y)=n-2<n$, so the global
condition fails. Every other nonedge has distance two and degree sum at least
$n$, so the local Ore hypothesis holds.

Thus the local Ore hypothesis is genuinely weaker than its global counterpart,
even for diagonal systems. Rainbow distance is therefore essential: the
degree-sum condition is imposed only on pairs that already admit a short
rainbow connection.

To lower the Ore threshold, distinguish Hamiltonicity from
near-spanning closure. Define
\begin{equation*}
 \delta(u)+\delta(v)<n-1,\qquad d^c(u,v)\le2
 \quad\Longrightarrow\quad uv\in E_{\cap}.
 \tag{LO$^-$}\label{eq:critical-local-ore}
\end{equation*}

\begin{theorem}[Critical local Ore long-cycle theorem]\label{thm:ore-almost-spanning}
Let $n\ge3$. If every member is $2$-connected and $\G$ satisfies
\textup{(LO$^-$)}, then $\G$ contains a rainbow cycle of length at least
$n-1$ (a Hamilton cycle when $n=3$).
\end{theorem}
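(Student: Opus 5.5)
We argue by contradiction via rotation--extension on rainbow paths. Suppose $\G$ has no rainbow cycle of length at least $n-1$. For $n=3$ every $2$-connected member is $K_3$, and any bijection of the three colors to the three edges gives a rainbow Hamilton cycle. So assume $n\ge4$.

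First we produce a long rainbow path. Among all rainbow paths $P=x_0x_1\cdots x_\ell$, with colors from an injective assignment $c$ into $[n]$, choose one maximizing $\ell$. Subject to that, maximize $\delta(x_0)+\delta(x_\ell)$. Since $\ell\le n-1$, at least $n-\ell\ge1$ colors are unused. Maximality gives $N_{G_i}(x_0)\cup N_{G_i}(x_\ell)\subseteq V(P)$ for every unused color $i$. The shared rotation lemma from earlier in the paper then applies. Whenever $x_0x_{j+1}\in E(G_i)$ for an unused color $i$, the path $x_jx_{j-1}\cdots x_0x_{j+1}\cdots x_\ell$ is again a maximum rainbow path: the released color $c(x_jx_{j+1})$ becomes unused. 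We use this to compare endpoint pairs. If the endpoints satisfy $\delta(x_0)+\delta(x_\ell)\ge n-1$, the standard Ore crossing count applies to the sets $\{j: x_0x_{j+1}\in G_i\}$ and $\{j:x_\ell x_j\in G_{i'}\}$, with unused colors $i\ne i'$ and color bookkeeping as in the proof of Theorem~\ref{thm:local-ore}. These two sets live in an index set of size $\ell\le n-1$ and have total size at least $n-1$. So either they intersect, giving a rainbow cycle on $V(P)$, or $\ell=n-1$ and they partition the indices exactly. In the second case we get a cycle on $n-1$ vertices by dropping one endpoint: we rotate once and close up using the rigidity of the extremal configuration. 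In either case, if $|V(P)|\ge n-1$ we are done. If $|V(P)|\le n-2$, a cycle on $V(P)$ together with $2$-connectivity of an unused-color member gives a longer rainbow path, contradicting maximality. Here a cycle of length $\ell+1$ leaves at least $n-\ell-1\ge1$ unused colors, and a released color serves as the extension color.

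The remaining case is $\delta(x_0)+\delta(x_\ell)<n-1$, where (LO$^-$) must be invoked. We show $d^c(x_0,x_\ell)\le2$, or find a rotated endpoint pair at rainbow distance at most two. Then (LO$^-$) forces $x_0x_\ell\in E_\cap$. A common edge accepts any unused color and closes $P$ into a rainbow cycle of length $\ell+1$, which leads to the same conclusion as before. To get rainbow distance two, we use $2$-connectivity of an unused member $G_i$: it contains two paths from $x_0$ into $P$, so $x_0$ has some neighbor $x_{j+1}$ with $j\ge1$. Rotating at $x_{j+1}$ makes $x_j$ a new endpoint. Moreover $x_0x_{j+1}x_j$ is a rainbow path of length two (colors $i$ and $c(x_jx_{j+1})$), so $d^c(x_0,x_j)\le2$. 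If both $\delta(x_0),\delta(x_j)$ are small, (LO$^-$) gives $x_0x_j\in E_\cap$. This is the critical neighborhood rigidity mechanism: every rotation endpoint reached from $x_0$ in one step is joined to $x_0$ by a common edge. Iterating, the low-degree rotation endpoints form a clique of common edges containing $x_0$. A common chord $x_0x_j$ with $x_{j+1}$ adjacent to $x_0$ yields a cycle on $x_0\cdots x_j$ plus a path, enabling a further rotation toward $x_\ell$. Alternatively, the choice maximizing $\delta(x_0)+\delta(x_\ell)$ forces some rotated endpoint pair to have degree sum at least $n-1$, which returns us to the Ore-count case.

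The main obstacle is this low-degree-sum case. We need to show that the rigidity produced by (LO$^-$) on rotation endpoints either supplies a common closing edge or raises the endpoint degree sum to $n-1$. We also need to keep the color accounting consistent through successive rotations, since each rotation changes which colors are unused. I would first try the argument of Theorem~\ref{thm:local-ore} essentially verbatim, with threshold $n-1$. The loss of one unit should then be exactly absorbed by allowing the cycle to miss one vertex, namely the endpoint discarded in the tight partition case. The complete bipartite examples mentioned in the abstract indicate that this tight case is where the accounting must be done carefully.
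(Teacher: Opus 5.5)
Your proposal has a genuine gap exactly where the theorem is hard. The two-color crossing count with distinct unused colors $i\ne i'$ is available only when $|V(P)|\le n-1$, and in that range it does work. Under these hypotheses, however, a longest rainbow path is in fact Hamiltonian (Theorem~\ref{thm2.3}, whose proof already needs Lemma~\ref{lem:trapped}). Then exactly one color is unused, so your branch ``$\ell=n-1$ and the sets partition the indices'' is not available. The second endpoint set would have to be read in the released color $c(x_jx_{j+1})$, which varies with $j$, so its size is not bounded below by $\delta(x_\ell)$. This is why Lemma~\ref{lem2.1} needs a common edge, to free a fixed second color, and needs threshold $n$.

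To get a cycle of length $n-1$ you must pass to the prefix $x_0\cdots x_{n-2}$, which misses two colors. Its relevant sum is $\delta(x_0)+\delta(x_{n-2})$, and counting, including the omitted vertex, yields only $\delta(x_0)+\delta(x_{n-2})\le n-1$ (Lemma~\ref{lem:critical-count}(2)). The critical value $n-1$ is precisely the equality case that counting cannot exclude. Your ``rotate once and close up using the rigidity of the extremal configuration'' is therefore the whole difficulty, not a routine step. The paper resolves it in Theorem~\ref{thm:prescribed-closure}:
\begin{itemize}
\item at equality the missing-color neighborhoods coincide, and the omitted vertex attaches in both missing colors to one endpoint only;
\item the omitted vertex is then an adjacent twin of that endpoint (Lemma~\ref{lem:critical-twins});
\item Lemma~\ref{lem:critical-insertion} and two further applications of Lemma~\ref{lem:trapped} pin down the degrees;
\item a predecessor-closed neighborhood finally yields a cut vertex of $G_b$.
\end{itemize}
Nothing in your sketch supplies or replaces this.

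The low-sum case is also left open, as you acknowledge. Your common-edge clique on rotation endpoints is only the first step of Lemma~\ref{lem:trapped}, namely the common star $x_1x_i\in E_\cap$ for $i<K$. A contradiction requires showing that neither $G_1$ nor $G_m$ can escape the block $\{x_1,\dots,x_K\}$, using $2$-connectivity of $G_1-x_K$ and $G_m-x_{K+1}$. Neither of your two promised outcomes (a common closing edge, or a rotated pair with sum at least $n-1$) is established. You would also need extremality for the prefix sum $\delta(v_1)+\delta(v_{n-1})$ of a Hamilton path. And you do not treat the configuration $\delta(v_1),\delta(v_n)\ge h>\delta(v_{n-1})$, which the paper eliminates by right rotations forcing two consecutive low vertices, followed by Lemma~\ref{lem:ore-high}; that lemma itself relies on Theorem~\ref{thm:prescribed-closure} for odd $n$.

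Finally, copying the proof of Theorem~\ref{thm:local-ore} cannot work, for three reasons:
\begin{itemize}
\item it starts from an existing rainbow $(n-1)$-cycle, which is what you are trying to produce;
\item it uses Theorem~\ref{thm:structural}, which requires \textup{(RF)}, and \textup{(RF)} is not implied by \textup{(LO$^-$)} for odd $n$;
\item at threshold $n-1$ its closing summation gives only $2S\ge n(n-1)$, which does not contradict $2S\le n^2-1$.
\end{itemize}
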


The critical endpoint statement is stronger than global long-cycle
existence: under \textup{(LO$^-$)} and the absence of a rainbow Hamilton
cycle, every rainbow path on $n-1$ vertices with endpoint system-degree
sum at least $n-1$ closes on its prescribed vertex set
(Theorem~\ref{thm:prescribed-closure}). Handling equality is essential
to the proof;
its missing-color neighborhoods yield adjacent twins and then a cut
vertex. Diagonal complete bipartite examples show that the local Ore
threshold $n-1$ for the long-cycle conclusion cannot be lowered uniformly.
They also show why this improvement does not give Hamiltonicity at
threshold $n-1$. Optimality of the endpoint threshold under the retained
local Ore hypothesis is not claimed.

The paper is organized as follows. Section~\ref{sec:rotation} develops
the recoloring and trapping tools and proves a rainbow Hamilton-path
theorem. Section~\ref{sec:almost-spanning} proves the Fan-type
almost-spanning cycle theorem. Section~\ref{sec:critical-ore} treats
the critical local Ore threshold $n-1$, including closure on a
prescribed vertex set and the sharp general long-cycle conclusion.
Section~\ref{sec:structural-proof} proves the structural obstruction and
the degree-sequence bound. Section~\ref{sec:local-ore-proof} deduces
Hamiltonicity at the local Ore threshold $n$.

\section{Rainbow rotation and extension}\label{sec:rotation}

\subsection{Recoloring and endpoint closure}

For a rainbow subgraph $H$, fix an injective map
$\phi:E(H)\to[n]$ such that $e\in E(G_{\phi(e)})$ for every $e\in E(H)$.
We call $(H,\phi)$ a \emph{colored rainbow subgraph}, and call a color in
$[n]\setminus\phi(E(H))$ \emph{missing}. For an edge
$e\in\binom{V}{2}$, write
$\;c(e)=\{i\in[n]:e\in E(G_i)\}\;$
for its set of available colors. When normalizing the coloring of a fixed
rainbow path or cycle, we may apply a global permutation to the member labels.
Such a permutation preserves $\delta(v)$, $E_{\cap}$, rainbow distance, and
the existence of rainbow paths and cycles.

Throughout, edges unaffected by a rotation retain their colors. When a
color-$i$ edge is deleted, a retained common edge may be recolored with color
$i$; colors assigned to all other new edges are stated explicitly. The next
lemma records this elementary recoloring principle.

\begin{lemma}[Released-color recoloring principle]
\label{lem:rotation-principle}
Let $(H,\phi)$ be a colored rainbow path or cycle. Let
$F\subseteq E(H)$ be a set of edges to be deleted, let
$B\subseteq E(H)\setminus F$ consist of common edges that are retained
but may be recolored, and let $A$ be a set of new edges such that
\[
  H'=(H-F)+A
\]
is again a path or a cycle. Put
\[
  M=[n]\setminus\phi(E(H)),
  \qquad
  R=\phi(F\cup B).
\]
If the edges in $A\cup B$ admit pairwise distinct legal colors from
$M\cup R$, then, after making those assignments and leaving every edge
of $E(H)\setminus(F\cup B)$ in its original color, $H'$ is rainbow.
In particular, every edge of $B\subseteq E_{\cap}$ may receive any
chosen color in $M\cup R$ that has not already been assigned to another
edge of $A\cup B$.
\end{lemma}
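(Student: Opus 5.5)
The plan is to construct the new coloring $\phi'$ of $H'$ explicitly and check the two defining requirements of a colored rainbow subgraph: every edge of $H'$ lies in the member indexed by its color, and $\phi'$ is injective. Since $E(H')=(E(H)\setminus F)\cup A$, the edges of $H'$ split into three disjoint classes: the untouched edges $E(H)\setminus(F\cup B)$, the retained but recolored edges $B$, and the new edges $A$. (If an edge of $A$ happens to coincide with a deleted edge of $F$, we simply treat it as new; $A$ is disjoint from $E(H)\setminus F$ because $H'$ is simple.) We set $\phi'(e)=\phi(e)$ on the first class and let $\phi'$ be the given pairwise distinct legal assignment from $M\cup R$ on $A\cup B$.

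\textbf{Legality.} On the untouched class this is inherited from $\phi$. On $A\cup B$ it is the hypothesis that the chosen colors are legal, that is, $e\in E(G_{\phi'(e)})$.

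\textbf{Injectivity.} Within the untouched class, $\phi'$ agrees with the injective map $\phi$. Within $A\cup B$, the colors are pairwise distinct by hypothesis. For the cross case, take $e$ untouched and $f\in A\cup B$. Then $\phi'(e)=\phi(e)\in\phi(E(H)\setminus(F\cup B))$. This set is disjoint from $M=[n]\setminus\phi(E(H))$ by definition, and disjoint from $R=\phi(F\cup B)$ because $\phi$ is injective on $E(H)$. Hence $\phi'(e)\notin M\cup R\ni\phi'(f)$. Since $H'$ is a path or cycle by assumption, $(H',\phi')$ is a colored rainbow path or cycle.

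For the ``in particular'' clause, observe that an edge $e\in B\subseteq E_{\cap}$ lies in every $G_i$, so every color $i\in[n]$, and in particular every color of $M\cup R$, is legal for $e$. Therefore any choice of color in $M\cup R$ not already used on another edge of $A\cup B$ keeps the assignment on $A\cup B$ legal and pairwise distinct, and the main statement applies.

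There is no real obstacle here. The only point needing care is the disjointness of the retained colors from $M\cup R$, which rests on the injectivity of $\phi$ on all of $E(H)$ and on including the colors of $B$ in $R$, so that recolored edges do not collide with their own old colors.
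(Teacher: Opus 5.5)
Your proof is correct and follows the paper's argument. The retained colors $\phi(E(H)\setminus(F\cup B))$ avoid $M\cup R$ by injectivity of $\phi$ and the definition of $M$, while the assignments on $A\cup B$ are legal and pairwise distinct by hypothesis; you additionally spell out legality and the ``in particular'' clause (every color is legal on a common edge), which the paper leaves implicit.
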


\begin{proof}
Because $(H,\phi)$ is rainbow, no color in $R=\phi(F\cup B)$ appears on
an edge of $E(H)\setminus(F\cup B)$, and no color in $M$ appears on $H$.
Thus the colors retained on $E(H)\setminus(F\cup B)$ are disjoint from
$M\cup R$. The colors assigned to $A\cup B$ are legal and pairwise distinct
by hypothesis, so $H'$ is rainbow.
\end{proof}

\medskip
\noindent The following two closure lemmas will be used repeatedly.

\begin{lemma}[Endpoint-sum closure]\label{lem2.1}
	Let $\G=(G_1,\ldots,G_n)$ be a graph system on a common vertex set
	of size $n\ge3$. Suppose that $\G$ contains a rainbow Hamilton path
	\[
	P=u_1u_2\cdots u_n
	\]
	containing an edge of $E_{\cap}$. If
	\[
	\delta(u_1)+\delta(u_n)\ge n,
	\]
	then $\G$ contains a rainbow Hamilton cycle.
\end{lemma}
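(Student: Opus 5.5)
The plan is the classical Ore rotation, with the common edge serving as a color buffer. This makes both closing edges draw on two \emph{fixed} members, $G_m$ and $G_f$, whose degrees at $u_1$ and $u_n$ are controlled by $\delta$. Fix a coloring $\phi$ of $P$ and write $c_j=\phi(u_ju_{j+1})$ for $1\le j\le n-1$. Since $P$ has $n-1$ edges, exactly one color $m$ is missing. Let $e\in E(P)\cap E_{\cap}$ and put $f=\phi(e)$. By the definition of system degree, $d_{G_m}(u_1)\ge\delta(u_1)$ and $d_{G_f}(u_n)\ge\delta(u_n)$.

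First I dispose of direct closures. If $u_1u_n\in E(G_m)$, adding it with color $m$ gives a rainbow Hamilton cycle. If $u_1u_n\in E(G_f)$, I apply Lemma~\ref{lem:rotation-principle} with $F=\varnothing$, $B=\{e\}$ and $A=\{u_1u_n\}$. Here $e$ is recolored $m$, which is legal because $e$ is common, and $u_1u_n$ receives $f$. So I may assume $u_1u_n\notin E(G_m)\cup E(G_f)$.

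Next I define the index sets
\[
 X=\{\,i\in[n-1]: u_1u_{i+1}\in E(G_m)\,\},\qquad
 Y=\{\,i\in[n-1]: u_iu_n\in E(G_f)\,\}.
\]
The map $i\mapsto u_{i+1}$ is a bijection from $[n-1]$ onto $V\setminus\{u_1\}$, so $|X|=d_{G_m}(u_1)\ge\delta(u_1)$. Similarly $|Y|\ge\delta(u_n)$. Hence $|X|+|Y|\ge n>n-1$, and there is an index $i\in X\cap Y$. By the reduction above, $1<i<n-1$.

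For this $i$, I delete $u_iu_{i+1}$ and add $u_1u_{i+1}$ and $u_iu_n$. This produces the Hamilton cycle $u_1\cdots u_iu_nu_{n-1}\cdots u_{i+1}u_1$. The colors are assigned as follows.
\begin{itemize}
\item If $u_iu_{i+1}=e$, then $c_i=f$. I take $F=\{e\}$, $B=\varnothing$ and $M\cup R=\{m,f\}$, and give $u_1u_{i+1}$ color $m$ and $u_iu_n$ color $f$.
\item Otherwise $e$ is retained. I take $F=\{u_iu_{i+1}\}$ and $B=\{e\}$, so $R=\{c_i,f\}$. Then $e$ is recolored $c_i$ (legal since $e\in E_{\cap}$), $u_1u_{i+1}$ receives $m$, and $u_iu_n$ receives $f$.
\end{itemize}
In both cases the three colors are distinct and legal, so Lemma~\ref{lem:rotation-principle} shows the cycle is rainbow.

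The only delicate point is realizing that the released color $c_i$ depends on $i$, so the naive rotation would need degree information in the varying member $G_{c_i}$. Routing $c_i$ onto the common edge removes this dependence and leaves the pigeonhole count on the two fixed members $G_m$ and $G_f$. The remaining checks are bookkeeping:
\begin{itemize}
\item the case where the deleted edge is $e$ itself, handled above;
\item the boundary indices $i\in\{1,n-1\}$, which are excluded by the direct-closure step;
\item verifying that the three colors used are pairwise distinct.
\end{itemize}
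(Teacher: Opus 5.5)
Your proof is correct and is essentially the paper's argument. The paper uses the same Ore-type rotation with the common edge absorbing the released color $c_i$, and the same pigeonhole count on the two fixed members (the missing color and the common edge's color). The only difference is that the paper assigns the common edge's color at $u_1$ and the missing color at $u_n$, which is the mirror image of your choice.
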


\begin{proof}
	Suppose for a contradiction that $\G$ has no rainbow Hamilton cycle.
	Fix a rainbow coloring of $P$ and globally permute the member labels so that
	\[
	\phi(u_iu_{i+1})=i
	\qquad(i\in[n-1]).
	\]
	Let $u_ku_{k+1}\in E_{\cap}$. Recoloring this common edge with color
	$n$ makes color $k$ missing. Hence
	\[
	k,n\notin c(u_1u_n),
	\]
	for otherwise $P$ can be closed into a rainbow Hamilton cycle.
	
	Define
	\[
	I_1=\{i\in[n-2]:k\in c(u_1u_{i+1})\},
	\qquad
	I_2=\{i\in[2,n-1]:n\in c(u_iu_n)\}.
	\]
	The sets $I_1$ and $I_2$ are disjoint. Otherwise, let
	$j\in I_1\cap I_2$. If $j\ne k$, delete $u_ju_{j+1}$, assign color
$k$ to $u_1u_{j+1}$ and color $n$ to $u_ju_n$, and recolor the common
	edge $u_ku_{k+1}$ with the released color $j$. If $j=k$, simply delete
	$u_ku_{k+1}$ and add $u_1u_{k+1}$ in color $k$ and $u_ku_n$ in color
	$n$. Either construction gives a rainbow Hamilton cycle, a contradiction.
	
	Thus
	\[
	|I_1|+|I_2|\le n-1.
	\]
	Since $k,n\notin c(u_1u_n)$, every $G_k$-neighbor of $u_1$ is counted
	by $I_1$, and every $G_n$-neighbor of $u_n$ is counted by $I_2$.
	Hence
	\[
	\delta(u_1)+\delta(u_n)
\le d_{G_k}(u_1)+d_{G_n}(u_n)
=|I_1|+|I_2|
	\le n-1,
	\]
	contrary to the hypothesis.
\end{proof}

\begin{lemma}[Almost-spanning closure]\label{lem2.2}
	Let $n\ge4$, and suppose that $\G$ contains no rainbow Hamilton cycle.
	If
	\[
	P=v_1v_2\cdots v_{n-1}
	\]
	is a rainbow path satisfying
	\[
	\delta(v_1)+\delta(v_{n-1})\ge n,
	\]
	then $V(P)$ spans a rainbow cycle of length $n-1$.
\end{lemma}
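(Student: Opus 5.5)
The proof runs the standard Pósa-type crossing argument on the vertex set of $P$, using the one vertex outside $P$ to supply slack. Let $w$ be the unique vertex of $V\setminus V(P)$. Fix a rainbow coloring of $P$ and globally permute labels so that $\phi(v_iv_{i+1})=i$ for $i\in[n-2]$. The colors $n-1$ and $n$ are then missing. We argue by contradiction and assume $V(P)$ spans no rainbow cycle of length $n-1$.

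The first step handles the two missing colors at the endpoints. Neither $n-1$ nor $n$ lies in $c(v_1v_{n-1})$, since otherwise the path closes directly. Next, $w$ cannot be joined to $v_1$ in one missing color and to $v_{n-1}$ in the other. If it were, inserting $w$ would give a rainbow Hamilton cycle, contradicting the standing hypothesis of no rainbow Hamilton cycle.

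The main step is the crossing count. Define
\[
I_1=\{i\in[n-3]: n-1\in c(v_1v_{i+1})\},\qquad I_2=\{i\in[2,n-2]: n\in c(v_iv_{n-1})\}.
\]
These sets are disjoint. If $j\in I_1\cap I_2$, delete $v_jv_{j+1}$, add $v_1v_{j+1}$ in color $n-1$ and $v_jv_{n-1}$ in color $n$. By Lemma~\ref{lem:rotation-principle} this yields a rainbow cycle on $V(P)$. Since $I_1,I_2\subseteq[n-2]$, we get $|I_1|+|I_2|\le n-2$.

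Every $G_{n-1}$-neighbor of $v_1$ other than $w$ is counted by $I_1$, because $v_{n-1}$ is excluded. Likewise every $G_n$-neighbor of $v_{n-1}$ other than $w$ is counted by $I_2$. Hence
\[
d_{G_{n-1}}(v_1)+d_{G_n}(v_{n-1})\le n-2+[v_1w\in G_{n-1}]+[v_{n-1}w\in G_n].
\]
The left side is at least $\delta(v_1)+\delta(v_{n-1})\ge n$. So both indicators must equal $1$, and the inequalities are tight. But then $w$ is joined to $v_1$ in color $n-1$ and to $v_{n-1}$ in color $n$. Inserting $w$ as above gives the path $wv_1\cdots v_{n-1}w$, a rainbow Hamilton cycle in colors $1,\dots,n-2,n-1,n$, which is a contradiction.

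Since the argument closes without needing a common edge, the main obstacle is only the bookkeeping of index ranges. One must check that $I_1$ and $I_2$ both sit in an $(n-2)$-element range, that $n\ge4$ keeps these ranges nondegenerate, and that the $j=1$ and $j=n-2$ edge cases of the crossing swap still yield cycles. These cases reduce to the excluded direct closure $v_1v_{n-1}$ and so are already ruled out.
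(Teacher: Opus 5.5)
Your proof is correct and follows the paper's argument: the same normalization, the same two-chord disjointness of $I_1,I_2$ inside an $(n-2)$-element index range, and the same accounting for the single outside vertex. The only difference is cosmetic. The paper first uses symmetry to assume one attachment of the outside vertex is absent, which gives the bound $n-1$ directly. You instead keep both indicators in the count and rule out the case where both equal $1$ at the end.
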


\begin{proof}
	Suppose for a contradiction that there is no rainbow cycle on $V(P)$.
	Let $v_n$ be the unique vertex outside $P$. Globally permute the member
	labels so that
	\[
	\phi(v_iv_{i+1})=i
	\qquad(i\in[n-2]).
	\]
	Thus colors $n-1$ and $n$ are missing.
	
	If
	\[
	v_1v_n\in E(G_n)
	\quad\text{and}\quad
	v_{n-1}v_n\in E(G_{n-1}),
	\]
	then these two edges together with $P$ form a rainbow Hamilton cycle.
	Hence, by symmetry, we may assume
	\[
	v_1v_n\notin E(G_n).
	\]
	Since there is no rainbow cycle on $V(P)$ and both colors $n-1$ and $n$
	are missing from $P$, we also have
	\[
	v_1v_{n-1}\notin E(G_{n-1})\cup E(G_n).
	\]
	
	Define
	\[
	I_1=\{i\in[n-3]:n\in c(v_1v_{i+1})\},
	\qquad
	I_2=\{i\in[2,n-2]:n-1\in c(v_iv_{n-1})\}.
	\]
	The sets $I_1$ and $I_2$ are disjoint. If $j\in I_1\cap I_2$, then
	deleting $v_jv_{j+1}$ and adding
	$v_1v_{j+1}$ and $v_jv_{n-1}$, with colors $n$ and $n-1$,
	respectively, gives a rainbow cycle on $V(P)$, a contradiction.
	
	Therefore
	\[
	|I_1|+|I_2|\le n-2.
	\]
	By the two displayed nonadjacencies above, every $G_n$-neighbor of
	$v_1$ is counted by $I_1$. Likewise, the only possible
	$G_{n-1}$-neighbor of $v_{n-1}$ not counted by $I_2$ is $v_n$. Hence
	\[
	\delta(v_1)+\delta(v_{n-1})
	\le d_{G_n}(v_1)+d_{G_{n-1}}(v_{n-1})
	\le |I_1|+|I_2|+1
	\le n-1,
	\]
	contrary to the hypothesis.
\end{proof}

\subsection{Trapped endpoints}

\begin{lemma}[Trapped endpoints force a vertex cut]\label{lem:trapped}
Let $P=x_1\cdots x_m$, $m\ge3$, be rainbow with edge colors
$1,\ldots,m-1$, and use the palette $[m]$.
Put $S=V(P)$ and let $L\subseteq M\subseteq V$, with $x_1\in L$.
Suppose that distinct vertices of $L$ and $M$ at rainbow distance at most two
are joined by a common edge. Assume the following for rainbow paths
$Q$ on $S$ ending at $x_m$ and using $m-1$ colors from this palette.
\begin{enumerate}
\item If $Q$ starts at $w$ and misses $r$, then
$N_{G_r}(w)\subseteq S\setminus\{x_m\}$.
\item If $x_1$ is internal in $Q$, with neighbors $x_2$ and
$q\in N_{G_1}(x_1)\cup N_{G_m}(x_1)$, then its first endpoint $w$
lies in $M$; if $w\notin N_{G_m}(x_1)$, then $w\in L$.
\end{enumerate}
Then $G_1$ and $G_m$ cannot both be $2$-connected.
\end{lemma}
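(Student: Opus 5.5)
The plan is to show that $V(P)$ splits along an initial segment of $P$ attached to the rest only through one vertex, so that a single vertex separates it in $G_1$ or in $G_m$. Throughout, ``rotation'' means the following elementary move at the free end $x_1$. Suppose $x_{j+1}$ is a neighbour of $x_1$ in an available colour $r\in\{1,m\}$. Delete $x_jx_{j+1}$, whose colour $j$ is released, and add $x_1x_{j+1}$. By Lemma~\ref{lem:rotation-principle} this gives the rainbow path
\[
Q=x_jx_{j-1}\cdots x_1x_{j+1}\cdots x_m
\]
on $S$. It ends at $x_m$ and uses $m-1$ colours of $[m]$. If $r=m$ it misses $j$. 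If $r=1$, we recolour or reuse colour $1$ on the new edge, which is legal when $x_1x_2$ is common; otherwise we only use the colour-$m$ rotations. In $Q$ the vertex $x_1$ is internal, with neighbours $x_2$ and $q=x_{j+1}$, so hypothesis~2 applies to $Q$.

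\emph{Step 1 (every rotation endpoint is rainbow-close to $x_1$ and joined to it by a common edge).} Let $x_{j+1}\in N_{G_m}(x_1)$ with $j\ge2$, and let $Q$ be the rotated path. Hypothesis~2 gives $x_j\in M$, and $x_j\in L$ unless $x_j\in N_{G_m}(x_1)$. The path $x_1x_{j+1}x_j$, coloured $m,j$, shows $d^c(x_1,x_j)\le 2$. Since $x_1\in L$ and $x_j\in M$, the closeness hypothesis forces $x_1x_j\in E_\cap$. In particular $x_j\in N_{G_m}(x_1)$. Iterating downward from the largest index gives the interval structure
\[
N_{G_m}(x_1)=\{x_2,\dots,x_t\},
\]
with every edge $x_1x_i$, $2\le i<t$, common. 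The same argument with colour $1$, using the common edges just found to free colour $1$, should give $N_{G_1}(x_1)\subseteq\{x_2,\dots,x_t\}$. Hypothesis~1 applied to $P$ itself, which misses $m$, gives $N_{G_m}(x_1)\subseteq S\setminus\{x_m\}$, so $t<m$.

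\emph{Step 2 (propagation to the whole segment).} Next I would apply hypothesis~1 to each rotated path $Q$ starting at $x_j$ and missing $j$. This gives $N_{G_j}(x_j)\subseteq S\setminus\{x_m\}$. Because $x_1x_j$ is common, $x_j$ can be moved back to be adjacent to $x_1$ and rotated again. This yields further rainbow paths on $S$ whose endpoints lie in $\{x_1,\dots,x_{t-1}\}$ and which miss colour $1$ or $m$. Applying hypotheses~1 and~2 to these paths, I would aim to prove that every $G_1$- or $G_m$-neighbour of a vertex of $U=\{x_1,\dots,x_{t-1}\}$ lies in $U\cup\{x_t\}$. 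The argument is a Pósa-type closure: any neighbour $x_s$ with $s>t$ would produce, by one more rotation, an endpoint rainbow-close to $x_1$. Step 1 would then force $x_1x_{s'}$ to be common for some $s'\ge t$, contradicting the maximality of $t$.

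\emph{Step 3 (conclusion).} Given Step 2, in the relevant member $G_r$ with $r\in\{1,m\}$, the vertex $x_t$ separates $U\ne\varnothing$ from $x_m\notin U\cup\{x_t\}$. Thus $G_r$ is not $2$-connected. This is where the ``cannot both'' wording enters: the argument may control only one of $G_1,G_m$, depending on which colour the rotations must use.

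The main obstacle is Step 2. The real difficulty is bookkeeping which colour is missing after each composite rotation, so that hypothesis~1 is applied in $G_1$ or $G_m$ rather than in some unrelated $G_j$. My first attempt would exploit the fact that all edges $x_1x_i$ are common. Each such edge can then be recoloured with $1$ or $m$, so that every secondary rotation misses one of these two colours. The consequence would be that the closed set $U$ is separated by $x_t$ in that same member.
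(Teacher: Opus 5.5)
Your Step~1 is correct for $G_m$ and matches the opening of the paper's proof. With $t$ maximal such that $x_1x_t\in E(G_m)$, you get $N_{G_m}(x_1)=\{x_2,\ldots,x_t\}$, and $x_i\in M$, $x_1x_i\in E_\cap$ for $2\le i<t$. The paper also records $x_{t+1}\notin M$, via the rainbow path $x_1x_tx_{t+1}$ and maximality of $t$; every later descent runs into this fact.

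The gap is Step~2, which you only state as an aim. The P\'osa closure you sketch cannot reach it, and it breaks exactly at $x_{t+1}$. Suppose $x_jx_{t+1}\in E(G_1)$ for some $j<t$. The path $x_t\cdots x_{j+1}x_1\cdots x_jx_{t+1}\cdots x_m$ is rainbow: for $j\ge2$, put colour $1$ on $x_jx_{t+1}$, colour $m$ on $x_1x_{j+1}$, and colour $j$ on $x_1x_2$. This path misses $t$ and exposes $x_t$. Since $x_t\in N_{G_m}(x_1)$, hypothesis~2 gives only $x_t\in M$, hence $x_1x_t\in E_\cap$. That is consistent with maximality of $t$; your own phrase ``$s'\ge t$'' admits $s'=t$, which contradicts nothing. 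A contradiction arises only for crossings to $x_s$ with $s\ge t+2$. Even there, the exposed endpoint $x_{s-1}$ is rainbow-close to $x_j$, not to $x_1$. What you get is $x_jx_{s-1}\in E_\cap$ and a descending crossing, which ends when $x_{t+1}\in L$ clashes with $x_{t+1}\notin M$. So neither your claim $N_{G_1}(x_1)\subseteq\{x_2,\ldots,x_t\}$ nor the claim that $x_t$ separates $U=\{x_1,\ldots,x_{t-1}\}$ in $G_1$ or $G_m$ follows, and Step~3 has nothing to act on.

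The missing idea is that the two members must be used one after the other; this is the real content of ``cannot both''. The paper proceeds as follows.
\begin{enumerate}
\item Rotations at the common chords, with $x_1x_2$ recoloured, give paths missing colour $1$. Hypothesis~1 then excludes $G_1$-edges from $U$ to $(V\setminus S)\cup\{x_m\}$.
\item $2$-connectivity of $G_1$ therefore forces a crossing $x_jx_s\in E(G_1)$ with $j<t<s\le m-1$, and the descent forces $s=t+1$.
\item This $G_1$-chord becomes an extra rotation tool. The path $R(j,t+1)$ puts $x_t$ into $M$, so $x_1x_t\in E_\cap$. Recoloured, $R(j,t+1)$ starts at $x_t$ and misses $m$, so hypothesis~1 excludes $G_m$-edges from the whole block $\{x_1,\ldots,x_t\}$ to $(V\setminus S)\cup\{x_m\}$.
\item The paths $T(i,p)$ exclude $G_m$-edges from $U$ to $x_p$ with $p\ge t+2$.
\item Connectivity of $G_m-x_{t+1}$ would then require a $G_m$-edge $x_tx_p$ with $p\ge t+2$. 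The path $F(p)$, which again routes through the $G_1$-chord $x_jx_{t+1}$, rules it out.
\end{enumerate}
The vertex cut that finally appears is therefore $x_{t+1}$ in $G_m$, separating $\{x_1,\ldots,x_t\}$, not $x_t$. It becomes available only after $G_1$'s connectivity has supplied the chord $x_jx_{t+1}$. Your plan has no step that produces or exploits such a chord. The single-member dichotomy in your Step~3, ``depending on which colour the rotations must use'', does not capture how the two connectivity hypotheses interact.
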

\begin{proof}
Assume both are $2$-connected. Condition (1) for $P$ gives
$N_{G_m}(x_1)\subseteq\{x_2,\ldots,x_{m-1}\}$.
Minimum degree two forces $m\ge4$ and provides a maximal index
$K\in[3,m-1]$ with $x_1x_K\in G_m$.
Rotate at this chord. Condition (2) puts the exposed endpoint
$x_{K-1}$ in $M$, so the rainbow path $x_1x_Kx_{K-1}$ makes
$x_1x_{K-1}$ common. Descending in the same way gives
\[
 x_i\in M,\quad x_1x_i\in E_\cap\quad(2\le i<K),
 \qquad x_{K+1}\notin M.                              \tag{eq:trapped-star}\label{eq:trapped-star}
\]
The last assertion follows from maximality of $K$ using
$x_1x_Kx_{K+1}$.

For $2\le i<K$, rotate to $x_i\cdots x_1x_{i+1}\cdots x_m$,
use $m$ on $x_1x_{i+1}$, and recolor $x_1x_2$ with $i$.
For $i=1$, just recolor $x_1x_2$ with $m$.
These paths miss $1$, so (1) excludes $G_1$-edges from
$\{x_1,\ldots,x_{K-1}\}$ to $(V\setminus S)\cup\{x_m\}$.
Connectivity of $G_1-x_K$ forces $x_jx_s\in G_1$ with
$1\le j<K<s\le m-1$; in particular $K\le m-2$.
Consider
\[
 R(j,s)=x_{s-1}\cdots x_{j+1}x_1\cdots x_jx_s\cdots x_m.
\]
For $j\ge2$, color its two joining edges with $m,1$ and recolor
$x_1x_2$ with $j$. For $j=1$, use $1$ on $x_1x_s$ and $m$ on $x_1x_2$.
The path misses $s-1$ and satisfies (2).
If $s\ge K+2$, its first endpoint lies outside $N_{G_m}(x_1)$,
hence in $L$. Since $x_j\in M$, the two-step path through $x_s$
makes $x_jx_{s-1}$ common. Replace the crossing by this edge in color
$1$ and repeat downwards. This exposes $x_{K+1}$, contradicting
\eqref{eq:trapped-star}. Thus $s=K+1$. The same path puts $x_K\in M$, so
$x_1x_K$ is common. The common star now extends through $K$.

For $2\le i<K$, rotate at the common edge $x_1x_{i+1}$ using the
released color $i$; the resulting path starts at $x_i$ and misses $m$.
For $i=1$, use $P$ itself.
For $i=K$, use $R(j,K+1)$ instead: when $j\ge2$, color
$x_jx_{K+1},x_1x_{j+1},x_1x_2$ with $1,K,j$; when $j=1$, use $1,K$
on the crossing and $x_1x_2$. It too misses $m$.
Thus (1) excludes $G_m$-edges from the whole block
$\{x_1,\ldots,x_K\}$ to $(V\setminus S)\cup\{x_m\}$.

There is also no $G_m$-edge $x_ix_p$ with $i<K$ and
$K+2\le p\le m-1$. The case $i=1$ follows from maximality of $K$.
For $i\ge2$, such an edge makes
\[
 T(i,p)=x_{p-1}\cdots x_{i+1}x_1\cdots x_ix_p\cdots x_m
\]
rainbow: use $m$ on $x_ix_p$ and $i$ on the common chord
$x_1x_{i+1}$. Its first endpoint lies in $L$ by (2), and $x_i\in M$.
Descending the crossing in color $m$ therefore gives the same
contradiction to \eqref{eq:trapped-star}.

Connectivity of $G_m-x_{K+1}$ must now provide
$x_Kx_t\in G_m$ for $K+2\le t\le m-1$. The path
\[
 F(t)=x_{t-1}\cdots x_{K+1}x_j\cdots x_1x_{j+1}\cdots x_Kx_t\cdots x_m
\]
is rainbow: use $1,m$ on the two crossings; if $j\ge2$, use $j,K$
on $x_1x_2,x_1x_{j+1}$; if $j=1$, use $K$ on $x_1x_2$.
It misses $t-1$. The neighbors of $x_1$ are $x_2,x_{j+1}$ when
$j\ge2$, or $x_2,x_{K+1}$ when $j=1$, so (2) applies.
Its first endpoint is in $L$ and $x_K\in M$; descending this last
crossing once again exposes the forbidden $x_{K+1}$. Contradiction.
\end{proof}

\subsection{A spanning rainbow path}\label{sec:longest-path}

\begin{theorem}[Spanning rainbow path]\label{thm2.3}
Suppose that every $G_i$ is $2$-connected and that $\G$ satisfies either
\textup{(RF)} or \textup{(LO$^-$)}. Then $\G$ contains a rainbow Hamilton path.
\end{theorem}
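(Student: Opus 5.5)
We argue by contradiction, taking a longest rainbow path and showing that if it is not spanning, Lemma~\ref{lem:trapped} applies to it, which contradicts $2$-connectivity. Among all rainbow paths in $\G$, choose $P=x_1\cdots x_m$ with $m$ maximum, and suppose $m<n$. Since each $G_i$ is $2$-connected and $n\ge3$, a short greedy argument gives $m\ge3$. Permute labels so that $\phi(x_ix_{i+1})=i$ for $i\in[m-1]$. We then work with the palette $[m]$, so color $m$ is missing; colors $m+1,\dots,n$ are also missing, but we only need one spare color. Put $S=V(P)$.

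\emph{Condition (1).} By maximality, any rainbow path $Q$ on $S$ with endpoint $x_m$, using $m-1$ colors from $[m]$, cannot be extended. If $Q$ starts at $w$ and misses color $r\in[m]$, an edge $wz\in G_r$ with $z\notin S$ would give a longer rainbow path. Also $wx_m\in G_r$ would close a rainbow cycle on $S$. Such a cycle, together with any edge leaving $S$ (which exists since $G_1$ is connected and $m<n$) colored by a color outside $[m]$ if $m\le n-2$, or handled via $G_r$-edges otherwise, yields a longer path. The edge case $m=n-1$, where only colors $m$ and $n$ are spare, needs care. Here we use that the cycle omits one color of $[m]$ plus color $n$, and that a $2$-connected $G_n$ has an edge from the outside vertex into the cycle. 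Hence $N_{G_r}(w)\subseteq S\setminus\{x_m\}$.

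\emph{Condition (2) and the sets $L,M$.} Under (RF), take $L=M=\{v:\delta(v)<n/2\}$. Under (LO$^-$), take $L=\{v:\delta(v)<(n-1)/2\}$ and $M=\{v:\delta(v)<n-1-\delta(x_1)\}$, after first arranging, by the symmetric choice of which end to call $x_1$, that $\delta(x_1)$ is the smaller endpoint degree.

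The hypothesis of Lemma~\ref{lem:trapped}, that pairs in $L\times M$ at rainbow distance at most two are common edges, follows from (RF), respectively from (LO$^-$), since $\delta(u)+\delta(v)<n-1$ for $u\in L$, $v\in M$ appropriately. We still need $x_1\in L$. If instead $\delta(x_1)\ge n/2$, then by (1) all of $N_{G_m}(x_1)$ lies among $x_2,\dots,x_{m-1}$. A standard P\'osa count, comparing the $G_m$-neighbors of $x_1$ with the $G_r$-neighbors of endpoints of rotated paths, then forces a rainbow cycle on $S$ or a contradiction with $m\le n-1$. This mirrors the counting in Lemmas~\ref{lem2.1} and~\ref{lem2.2}.

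For condition (2) itself: if $x_1$ is internal in $Q$ with neighbors $x_2$ and $q$, where $q\in N_{G_1}(x_1)\cup N_{G_m}(x_1)$, then $Q$ arises from a rotation, and its first endpoint $w$ is again a maximal-path endpoint. By (1) we get $\delta(w)\le d_{G_r}(w)\le m-2$. The key inequality we need is that $\delta(w)+\delta(x_1)$ is small, unless the rainbow cycle closure of Lemma~\ref{lem2.2} type applies. Combining this with the rainbow $2$-path $w\,q\,x_1$ or a degree count gives $w\in M$. When $w\notin N_{G_m}(x_1)$, a second count puts $w\in L$.

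Once (1) and (2) are verified, Lemma~\ref{lem:trapped} says $G_1$ and $G_m$ are not both $2$-connected, a contradiction; hence $m=n$. The main obstacle is verifying (2), and specifically the degree bounds placing rotated endpoints in $L$ and $M$. This requires a uniform P\'osa-type counting argument valid for every rotated path $Q$, not just for $P$. I would first try to prove that any endpoint $w$ of such a $Q$ satisfies $\delta(w)+\delta(x_1)\le m-1\le n-2$, by the disjoint index-set argument of Lemma~\ref{lem2.2} applied to the pair of paths $P$ and $Q$.
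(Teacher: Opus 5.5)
\textbf{There is a genuine gap, in condition~(2).} You select $P$ by length alone and then hope for a ``uniform P\'osa-type count'' giving $\delta(w)+\delta(x_1)\le m-1$ for the first endpoint $w$ of every admissible $Q$. The disjoint index-set argument does not give this. It bounds the degree sum of the two endpoints of a single path, i.e.\ $\delta(w)+\delta(x_m)$ for $Q$, whereas $x_1$ is an interior vertex of $Q$. Even if the inequality held, it would not put $w$ into $L=\{v:\delta(v)<n/2\}$ under \textup{(RF)} when $\delta(x_1)$ is small.

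Your \textup{(LO$^-$)} choice $L=\{\delta<(n-1)/2\}$, $M=\{\delta<n-1-\delta(x_1)\}$ also violates the hypothesis of Lemma~\ref{lem:trapped}. For $u\in L$ and $v\in M$ you only get $\delta(u)+\delta(v)<(n-1)/2+n-1-\delta(x_1)$. This exceeds $n-1$ whenever $\delta(x_1)<(n-1)/2$, which is exactly what $x_1\in L$ requires. The lemma's proof genuinely uses $L\times M$ pairs not involving $x_1$: in the crossing-descent step it joins $x_{s-1}\in L$ to $x_j\in M$.

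\textbf{The missing idea is a secondary extremal choice.} Among longest rainbow paths, take $P$ maximizing $\delta(x_1)+\delta(x_m)$. Every $Q$ in condition~(2) is a rainbow path on $m$ vertices ending at $x_m$, hence also longest. Extremality therefore gives $\delta(w)\le\delta(x_1)$ with no counting at all. Orient $P$ so that $\delta(x_1)$ is the smaller endpoint degree; the two-missing-color count $\delta(x_1)+\delta(x_m)\le m-1\le n-2$ puts it below $n/2$, and also below $h=\lfloor n/2\rfloor$. Then take $L=M=\{v:\delta(v)<n/2\}$ under \textup{(RF)}, or $L=M=\{v:\delta(v)<h\}$ under \textup{(LO$^-$)}. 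The latter works because $2(h-1)<n-1$ supplies the common edges. Condition~(2) then holds for every $Q$, whether or not $x_1$ is internal.

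Your verification of condition~(1) is right in substance but muddled. The cycle obtained by closing $Q$ uses all $m$ palette colors, not ``all but one''. All you need is that $m<n$ leaves some color outside $[m]$, whose connected layer has an edge leaving $S$; no special case $m=n-1$ arises.
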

\begin{proof}
Suppose otherwise, and choose a longest rainbow path $P=x_1\cdots x_m$,
$m<n$, maximizing its endpoint system-degree sum. Minimum degree two
gives $m\ge3$. Let $S=V(P)$. A rainbow cycle on $S$, together with an
edge crossing from $S$ to $V\setminus S$ in a connected missing layer,
would give a longer rainbow path. Thus no such cycle exists.
For two distinct missing colors $a,b$, the sets
\[
 A=\{j\in[m-1]:x_1x_{j+1}\in E(G_a)\},\qquad
 B=\{j\in[m-1]:x_jx_m\in E(G_b)\}
\]
are disjoint. Every neighbor in either missing layer lies in $S$ by
maximality, and the closing edge is absent. Consequently
\[
 \delta(x_1)+\delta(x_m)\le |A|+|B|\le m-1\le n-2.
\]
In the case \textup{(RF)}, orient $P$ so that $\delta(x_1)<n/2$ and set
$L=M=\{v:\delta(v)<n/2\}$. In the case \textup{(LO$^-$)}, put
$h=\lfloor n/2\rfloor$, orient so that $\delta(x_1)<h$, and set
$L=M=\{v:\delta(v)<h\}$. In either case, distinct vertices of $L$ at
rainbow distance at most two are joined by a common edge. For the second
case this follows from $2(h-1)<n-1$.

Apply Lemma~\ref{lem:trapped} with the palette consisting of the path
colors and one missing color. Its condition~\textup{(1)} follows from
nonextension and the absence of a cycle on $S$. Every rainbow path on $S$
ending at $x_m$ is also longest; endpoint extremality bounds its first
endpoint degree by $\delta(x_1)$, giving condition~\textup{(2)}.
The resulting vertex cut contradicts $2$-connectivity.
\end{proof}

\section{Almost-spanning cycles under the Fan condition}\label{sec:almost-spanning}

\begin{theorem}[Almost-spanning rainbow cycle]\label{thm2.4}
If every $G_i$ is $2$-connected and $\G$ satisfies \textup{(RF)}, then
$\G$ contains a rainbow cycle of length at least $n-1$.
\end{theorem}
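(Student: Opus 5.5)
We argue by contradiction. Assume that no rainbow Hamilton cycle exists; otherwise there is nothing to prove. By Theorem~\ref{thm2.3}, $\G$ has a rainbow Hamilton path $P=u_1\cdots u_n$. Among all such paths, choose one maximizing $\delta(u_1)+\delta(u_n)$. The tools are Lemma~\ref{lem2.1}, Lemma~\ref{lem2.2}, and the trapping Lemma~\ref{lem:trapped}.

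First, $P$ contains no edge of $E_{\cap}$. Otherwise Lemma~\ref{lem2.1} would close $P$ as soon as $\delta(u_1)+\delta(u_n)\ge n$. So the task is to force either a common edge on $P$ together with a large endpoint sum, or a large endpoint sum on a path with $n-1$ vertices.

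\textbf{Step 1: endpoint degrees.} Fix the coloring $\phi(u_iu_{i+1})=i$, so color $n$ is missing. The usual rotation sets are
\[
A=\{i:u_1u_{i+1}\in G_n\},\qquad B=\{i:u_iu_n\in G_n\}.
\]
With only one missing color, $A$ and $B$ need not be disjoint, so the standard count fails. Instead, apply (RF) to the two endpoints directly. Suppose both $\delta(u_1),\delta(u_n)<n/2$ and $d^c(u_1,u_n)\le 2$. Then $u_1u_n\in E_{\cap}$, and $P+u_1u_n$ with color $n$ is a rainbow Hamilton cycle. So either one endpoint has $\delta\ge n/2$, or $d^c(u_1,u_n)\ge3$.

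Endpoint rotations keep a rainbow Hamilton path. They also preserve maximality of the endpoint sum when applied appropriately. Using these rotations, together with trapping arguments of the type in Lemma~\ref{lem:trapped} with $L=M=\{v:\delta(v)<n/2\}$ and 2-connectivity of $G_n$ and $G_{\phi(\cdot)}$, we aim to show the following. Some endpoint, say $u_n$, has $\delta(u_n)\ge n/2$. Moreover, the vertex $u_1$ can be rotated so that its neighbor set stays low-degree and joined by common edges.

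\textbf{Step 2: delete one endpoint.} Consider $P'=u_1\cdots u_{n-1}$ or $P''=u_2\cdots u_n$. These are rainbow paths on $n-1$ vertices with two missing colors. The goal is an endpoint sum of at least $n$, so that Lemma~\ref{lem2.2} yields a rainbow cycle of length $n-1$.

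There are two cases.
\begin{itemize}
\item If both ends have $\delta\ge n/2$, the sum is at least $n$ and we are done.
\item Otherwise one end $x$ is low. We use rotations (P\'osa-type, with two missing colors now available) to move the low end, and let $N$ be the set of ends obtainable this way. If all obtainable ends stay low, then by (RF) pairs among them at rainbow distance two are joined by common edges. This produces common edges as in \eqref{eq:trapped-star}, and the trapping lemma gives a cut vertex in some $G_i$, contradicting 2-connectivity.
\end{itemize}
Hence some rotation produces a path on $n-1$ vertices with both endpoint degrees at least $n/2$, and Lemma~\ref{lem2.2} applies.

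\textbf{Main obstacle.} The hard step is verifying the hypotheses of Lemma~\ref{lem:trapped} in this setting. Condition~(1) there uses nonextendability, and that must be replaced. Here the relevant fact is that a rotated path on $n-1$ vertices with a low end and a missing color $r$ cannot send a $G_r$-edge to the excluded vertex without creating a rainbow Hamilton path of larger endpoint sum, or a Hamilton cycle via Lemma~\ref{lem2.1}. Proving this replacement carefully, by tracking which colors are released, is where I expect most of the work. I would try first to take the excluded vertex to be the high-degree endpoint $u_n$, so that extremality of $\delta(u_1)+\delta(u_n)$ supplies condition~(2).
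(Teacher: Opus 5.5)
Your outline has a genuine gap in Step~2. The inference that fails is that some rotation yields a rainbow path on $n-1$ vertices with both endpoint degrees at least $n/2$. Lemma~\ref{lem:trapped} controls only the end being rotated. Its condition~(1) can be checked only when the excluded vertex hangs off the \emph{fixed} end. Concretely, take $u_1\cdots u_{n-1}$ with $u_{n-1}$ fixed, and assume there is neither a rainbow $C_{n-1}$ nor a rainbow $C_n$. For the palette, use the prefix colors together with the single missing color $r$. The color $\ell$ of $u_{n-1}u_n$ must stay reserved, so two missing colors are not available for rotating. Then a palette-missing edge from a rotated end $w$ to $u_{n-1}$ closes a $C_{n-1}$. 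One to $u_n$ closes a $C_n$ through the unchanged edge $u_{n-1}u_n$. No endpoint-sum or Lemma~\ref{lem2.1} argument is needed. If instead you rotate the end adjacent to the excluded vertex (e.g.\ $u_2$ in $u_2\cdots u_n$), an edge from $w$ to that vertex merely gives another Hamilton path, so condition~(1) is unavailable. In the good orientation, your maximization of $\delta(u_1)+\delta(u_n)$ does supply condition~(2): any candidate path plus $u_{n-1}u_n$ is a Hamilton path ending at $u_n$. So the lemma forces $\delta(u_1)\ge n/2$, and the reversed path forces $\delta(u_n)\ge n/2$. But the lemma never certifies that the fixed end is high, and here the fixed end is the penultimate vertex. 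Once $u_1$ is high, Lemma~\ref{lem2.2} makes $u_{n-1}$ (and symmetrically $u_2$) low. So the trapping argument legitimately yields only that both ends of $P$ are high, which is not yet a contradiction.

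The missing idea is the other alternative you name but never pursue: once both ends are high, put a common edge on $P$ and invoke Lemma~\ref{lem2.1}. Rotate at $u_n$ in the missing color $r$. If $u_iu_n\in E(G_r)$ with $3\le i\le n-3$, the path $u_1\cdots u_iu_nu_{n-1}\cdots u_{i+1}$ has an $(n-1)$-vertex prefix from $u_1$ to $u_{i+2}$. Lemma~\ref{lem2.2} therefore gives $\delta(u_{i+2})<n/2$. The indices $i=1,2,n-2$ are excluded: they give a $C_n$, a $C_{n-1}$, and (by Lemma~\ref{lem2.2} with both ends high) a $C_{n-1}$, respectively. Hence at least $\lceil n/2\rceil-1$ of $u_5,\ldots,u_{n-1}$ are low. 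For $n\ge6$ this forces two consecutive low vertices; for $n=5$ it contradicts minimum degree two. Their path edge is common by \textup{(RF)}, and Lemma~\ref{lem2.1} closes $P$.

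This is essentially the paper's final step. The paper maximizes $\delta(v_1)+\delta(v_{n-1})$ instead and uses that extremality to make the exposed penultimate vertices low. It then needs a separate Lemma~\ref{lem2.1} argument, with a protected common edge, when $v_n$ is low. Your symmetric choice avoids that case, but only once this counting step is added. As written, Step~1 is only an aim and Step~2 rests on the invalid inference above. Your opening claim that $P$ has no common edge is also only conditional on $\delta(u_1)+\delta(u_n)\ge n$.
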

\begin{proof}
For $n\in\{3,4\}$, every member has minimum degree at least $n/2$, and
the transversal Dirac theorem~\cite{Joos2020} gives a rainbow Hamilton
cycle. Suppose $n\ge5$ and there is neither a rainbow $C_{n-1}$ nor $C_n$.
By Theorem~\ref{thm2.3}, choose a rainbow Hamilton path
$P=v_1\cdots v_n$ maximizing $\delta(v_1)+\delta(v_{n-1})$.
Write $r$ for its missing color and $\ell$ for its last-edge color.
Lemma~\ref{lem2.2} gives
$\delta(v_1)+\delta(v_{n-1})\le n-1$, so at least one of these
vertices has system degree below $n/2$.

If $\delta(v_1)<n/2$, apply Lemma~\ref{lem:trapped} to the prefix
$P-v_n$, with its path colors and $r$ as palette and
$L=M=\{w:\delta(w)<n/2\}$. A missing-color edge to the fixed last
vertex closes a $C_{n-1}$, and one to $v_n$, after appending the
unchanged last edge in color $\ell$, closes a $C_n$. This proves
condition~\textup{(1)}. Appending that last edge to any candidate
prefix path and using the choice of $P$ proves condition~\textup{(2)}.
This is impossible. Hence
$\delta(v_1)\ge n/2$ and $\delta(v_{n-1})<n/2$.

If $\delta(v_n)<n/2$, the last edge of $P$ is common by \textup{(RF)}.
Reverse $P$ and apply the same lemma to its prefix. Condition~\textup{(1)}
is unchanged. Every path appearing in condition~\textup{(2)}, after the
fixed final edge is appended, contains the protected common first edge
and ends at $v_1$. Lemma~\ref{lem2.1} forces its other endpoint below
$n/2$, as required. This too is impossible.

Thus both endpoints of $P$ have system degree at least $n/2$. Put
$h=\lceil n/2\rceil$. If $v_iv_n\in E(G_r)$ and $i\le n-2$, the right
rotation
\[
 v_1\cdots v_i v_n v_{n-1}\cdots v_{i+1}
\]
has penultimate vertex $v_{i+2}$. Endpoint extremality gives
$\delta(v_{i+2})\le\delta(v_{n-1})<n/2$. The indices $i=1,2$ close
cycles of lengths $n,n-1$, respectively, while $i=n-2$ would make
$v_n$ low. Therefore
\[
 I=\{i+2:v_iv_n\in E(G_r),\ 3\le i\le n-3\}\subseteq[5,n-1],
 \qquad |I|\ge h-1.
\]
For $n=5$ the neighbor restriction already contradicts minimum degree
two. For $n\ge6$, since $h-1>\lceil(n-5)/2\rceil$, the set $I$
contains consecutive indices. Their path edge joins two low vertices
and is common by \textup{(RF)}. The original $P$ now contradicts
Lemma~\ref{lem2.1}.
\end{proof}

\section{Critical local Ore closure}\label{sec:critical-ore}

This section lowers the near-spanning endpoint threshold from $n$ to
$n-1$. Its hypothesis is \textup{(LO$^-$)}; no implication between this
condition and \textup{(RF)} is assumed for odd $n$.

\subsection{Equality in endpoint counting}
\begin{lemma}[Endpoint counting]\label{lem:critical-count}
The following statements require neither connectivity nor \textup{(LO$^-$)}.
\begin{enumerate}
\item If a rainbow Hamilton path $H$ contains a common edge and there is
no rainbow $C_n$, then its endpoint degree sum is at most $n-1$.
At equality, both terminal edges belong to the missing color of $H$.
\item Let $P=x_1\cdots x_m$, $m=n-1\ge3$, omit $z$ and miss $a,b$.
Suppose there is no rainbow $C_n$ and no rainbow cycle on $V(P)$.
Write $u=x_1$, $v=x_m$. Then $\delta(u)+\delta(v)\le n-1$.
At equality, orient $P$ so that
\[
 zu\in G_a\cap G_b,\qquad zv\notin G_a\cup G_b.          \tag{eq:critical-attachment}\label{eq:critical-attachment}
\]
For $\{s,t\}=\{a,b\}$, the sets
\[
 A_s=\{j:ux_{j+1}\in G_s\},\quad
 B_t=\{j:x_jv\in G_t\}
\]
partition $[m-1]$, and $d_{G_s}(u)=\delta(u)$,
$d_{G_s}(v)=\delta(v)$. The first and last edges of $P$ lie in
$G_a\cap G_b$. At each endpoint its neighborhoods in colors $a,b$
and either original terminal-edge color are identical.
\end{enumerate}
\end{lemma}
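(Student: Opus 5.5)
The plan is to rerun the counting arguments of Lemmas~\ref{lem2.1} and~\ref{lem2.2}, keep track of every slack term, and read the equality structure off the places where slack vanishes.

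\emph{Part (1).} Let $H=u_1\cdots u_n$ contain the common edge $u_ku_{k+1}$, and normalize colors so that $\phi(u_iu_{i+1})=i$ and $n$ is missing. The proof of Lemma~\ref{lem2.1} already gives $\delta(u_1)+\delta(u_n)\le |I_1|+|I_2|\le n-1$. Equality forces three things at once:
\begin{itemize}
\item $d_{G_k}(u_1)=\delta(u_1)$ and $d_{G_n}(u_n)=\delta(u_n)$;
\item $I_1\sqcup I_2=[n-1]$;
\item $I_1\cap\{n-1\}$ and $I_2\cap\{1\}$ account for the boundary indices.
\end{itemize}
Index $1$ lies in $I_1\cup I_2$ but not in $I_2$, since $I_2\subseteq[2,n-1]$. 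So $1\in I_1$, which means $u_1u_2\in G_k$. By symmetry $n-1\in I_2$, so $u_{n-1}u_n\in G_n$.

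This is not yet the claim, because $k$ is not the missing color. To fix that, recolor $u_ku_{k+1}$ with $n$; this makes $k$ missing, and the same count then applies with the roles of $k$ and $n$ exchanged. Running the argument in both colorings puts both terminal edges into $G_k$ and into $G_n$. Hence both terminal edges carry the missing color whichever of the two colorings of $H$ is used.

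\emph{Part (2).} Here I follow the proof of Lemma~\ref{lem2.2}, but with both missing colors kept symmetric. Whenever $s\ne t$, the usual crossing argument shows that $A_s$ and $B_t$ are disjoint. Three facts feed the count:
\begin{itemize}
\item $uv\notin G_a\cup G_b$, since otherwise $V(P)$ spans a rainbow cycle;
\item $zu\in G_s$ and $zv\in G_t$ cannot both hold, since together with $P$ they would close a rainbow $C_n$;
\item every $G_s$-neighbor of $u$ other than $z$ is counted by $A_s$, and likewise every $G_t$-neighbor of $v$ other than $z$ is counted by $B_t$.
\end{itemize}
Together these give $d_{G_s}(u)+d_{G_t}(v)\le m-1+[zu\in G_s]+[zv\in G_t]\le m=n-1$.

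Now suppose equality holds. Then for both choices $(s,t)=(a,b)$ and $(b,a)$ we get: exactly one of $zu\in G_s$, $zv\in G_t$ holds; $A_s\sqcup B_t=[m-1]$; and both degrees equal the corresponding $\delta$. Call the pattern with $zu$ present the $u$-type and the other the $v$-type.

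Mixed patterns must be excluded. Suppose $zu\in G_a$ and $zv\in G_a$. Then the pair $(s,t)=(b,a)$ forces $zu\notin G_b$. Next, $zv\in G_a$ together with $zu\in G_b$ would give a $C_n$, so that pair is ruled out. The remaining combinations are then compared against the equality requirements. If a real conflict remains, I would rotate $P$ through an index $j$ in the partition to obtain a new path of the same type and derive a contradiction. Once the mixed patterns are gone, reorientation gives \eqref{eq:critical-attachment}.

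\emph{Remaining equality conclusions.} The terminal edges are handled through the partition. Index $1$ belongs to $A_s$ or $B_t$. In the first case $ux_2\in G_s$. In the second, $x_1v\in G_t$ is an edge $uv$, which is impossible. So $ux_2\in G_s$ for each $s$, placing the first edge in $G_a\cap G_b$, and symmetrically for the last edge.

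For the neighborhood identities, let $c_1$ be the first-edge color. Recolor the first edge with $a$, which is legal now that it lies in $G_a$; this makes $c_1$ missing. Applying the degree equality in the new coloring gives $d_{G_{c_1}}(u)=\delta(u)$ and puts the attachment set of $u$ into the same partition. Comparing the two partitions then shows $N_{G_a}(u)=N_{G_b}(u)=N_{G_{c_1}}(u)$, with the same argument at $v$.

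\emph{Main obstacle.} I expect the hardest step to be excluding the mixed attachment patterns, that is, pinning down \eqref{eq:critical-attachment} uniformly across both color pairs.
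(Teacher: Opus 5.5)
Your part (1) is fine, and so is the inequality in part (2). The gap is in the equality analysis of part (2), exactly at the step you flagged. Take the mixed pattern $zu,zv\in G_a$ and $zu,zv\notin G_b$. It satisfies every identity available from the pair $\{a,b\}$:
\begin{itemize}
\item for $(s,t)=(a,b)$ you get $\mathbf 1_{zu\in G_a}+\mathbf 1_{zv\in G_b}=1+0$, and for $(b,a)$ you get $0+1$;
\item the degree equalities only force $|A_b|=|A_a|+1$ and $|B_b|=|B_a|+1$, which are compatible with both partitions;
\item no rainbow $C_n$ or cycle on $V(P)$ arises, since $z$ meets both endpoints only in color $a$.
\end{itemize}
So comparing ``the remaining combinations against the equality requirements'' cannot exclude this pattern. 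Your proposed rotation through an index $j$ is unspecified: you do not say which index, what the new path is, or what contradiction follows. As written, \eqref{eq:critical-attachment} is unproved, and so is everything that relies on it.

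The missing idea is one you already use later, but in the wrong order. Your position-$1$ and position-$(m-1)$ argument shows that both terminal edges lie in $G_a\cap G_b$, and it does not depend on the attachment pattern, so do it first. Let $c$ be the original color of the first edge. Recolor that edge once with $b$, so the path misses $a,c$, and once with $a$, so it misses $b,c$. Both recolorings are legal, and the hypotheses and endpoint sum are unchanged, so the equality count applies again. You then get $\mathbf 1_{zu\in G_s}+\mathbf 1_{zv\in G_t}=1$ for all ordered pairs of distinct $s,t\in\{a,b,c\}$. Hence
$\mathbf 1_{zu\in G_a}=1-\mathbf 1_{zv\in G_c}=\mathbf 1_{zu\in G_b}$ and
$\mathbf 1_{zv\in G_a}=1-\mathbf 1_{zu\in G_c}=\mathbf 1_{zv\in G_b}$.
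Together with $\mathbf 1_{zu\in G_a}+\mathbf 1_{zv\in G_b}=1$, this gives \eqref{eq:critical-attachment} after reorientation; this is how the paper argues.

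Both recolorings are also needed for the neighborhood identities. Recoloring with $a$ alone gives only $A_a=A_c$ and $B_a=B_c$, since these are the complements of $B_b$ and of $A_b$ respectively. It does not give $A_a=A_b$; for that you need the pairs $(a,c)$ and $(c,a)$ from the recoloring with $b$. Finally, you must separately recolor the last edge to bring its original color into the identities at both endpoints.
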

\begin{proof}
For (1), let $r$ be missing and $c$ be the color of a common edge.
An endpoint closing edge in $r$ or $c$ gives $C_n$ after recoloring.
Otherwise define
$A_c=\{j:y_1y_{j+1}\in G_c\}$ and
$B_r=\{j:y_jy_n\in G_r\}$ for $H=y_1\cdots y_n$.
An intersection gives $C_n$: delete $y_jy_{j+1}$, use $c,r$ on the
two chords, and recolor the retained common edge with the deleted color.
Thus $A_c,B_r$ are disjoint and their sizes count the two full degrees.
At degree sum $n-1$ they partition $[n-1]$. The last position forces
$y_{n-1}y_n\in G_r$; reverse $H$ for its first edge.

For (2), the same two-chord argument makes $A_s,B_t$ disjoint.
Put $\alpha=\mathbf1_{zu\in G_s}$,
$\beta=\mathbf1_{zv\in G_t}$. No Hamilton cycle gives $\alpha+\beta\le1$.
Consequently
\[
 \delta(u)+\delta(v)
 \le d_{G_s}(u)+d_{G_t}(v)
 =|A_s|+|B_t|+\alpha+\beta\le(m-1)+1=n-1.
\]
At equality every inequality is tight, for both ordered pairs $s,t$.
The first and last partition positions put both terminal edges in both
missing colors. If $c$ is the first-edge color, recolor that edge with
$a$ or $b$. The same argument now applies to all three missing pairs
$\{a,b\},\{a,c\},\{b,c\}$.
For each $j$, their partition identities force
\[
 \mathbf1_{ux_{j+1}\in G_a}=\mathbf1_{ux_{j+1}\in G_b}
 =\mathbf1_{ux_{j+1}\in G_c},
\]
and the corresponding three indicators at $v$ are equal.
The outside identities $\alpha+\beta=1$ give the same equalities at $z$;
hence the two orientations in \eqref{eq:critical-attachment}. Recolor the last edge instead
to include its original color as well.
\end{proof}

\begin{lemma}[The omitted vertex inherits the endpoint neighborhood]\label{lem:critical-twins}
In the equality case of Lemma~\ref{lem:critical-count}(2), oriented as in \eqref{eq:critical-attachment},
put $p=\delta(u)$, $q=\delta(v)$, $t=\delta(z)$, so $p+q=n-1$.
Under \textup{(LO$^-$)},
\[
 t\le p,\qquad N_{G_s}(z)\setminus\{u\}\subseteq N_{G_s}(u)
 \quad(s=a,b).                                        \tag{eq:critical-twins}\label{eq:critical-twins}
\]
If $t=p$, then $d_{G_s}(z)=p$ and $u,z$ are adjacent twins in $G_s$:
$N_{G_s}(u)\setminus\{z\}=N_{G_s}(z)\setminus\{u\}$.
Connectivity is not required in this lemma.
\end{lemma}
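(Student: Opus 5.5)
The plan is to compare $z$ with $u$ through the rainbow Hamilton path
\[
 H=z\,u\,x_2\cdots x_m ,
\]
where $zu$ gets color $a$. This is legal by \eqref{eq:critical-attachment}. The path $H$ then misses only $b$, and its endpoints are $z$ and $v$.

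\emph{Step 1: the inclusion.} Fix $s\in\{a,b\}$ and a neighbor $w\neq u$ of $z$ in $G_s$. Since $zv\notin G_a\cup G_b$, we have $w\ne v$, so $w=x_{j+1}$ for some $j\in[m-1]$. Suppose $ux_{j+1}\notin G_s$. The partition $A_s\sqcup B_t=[m-1]$ of Lemma~\ref{lem:critical-count}(2), with $t$ the other missing color, then forces $j\in B_t$, that is, $x_jv\in G_t$. Now delete $x_jx_{j+1}$ and add the two chords $zx_{j+1}$ (color $s$) and $x_jv$ (color $t$). This turns $H$ into the Hamilton cycle
\[
 z\,x_{j+1}\cdots x_m\,x_j\cdots x_2\,u\,z .
\]
This cycle is rainbow provided $zu$ can be given a color different from $a,b$. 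Two cases:
\begin{itemize}
\item If $j=1$, the deleted edge is $ux_2$. Its color $c$ is released, and $zu\in G_c$ by the last assertion of Lemma~\ref{lem:critical-count}(2): the neighborhoods of $u$ in $a$, $b$ and $c$ coincide, and $z$ lies in the first two.
\item If $j\ge2$, keep $zu$ in color $c$ and recolor the retained first edge $ux_2$, which lies in $G_a\cap G_b$. Since $a,b$ are both on the chords, $ux_2$ must instead take the released color of $x_jx_{j+1}$. The main obstacle is exactly this point: that released color need not be legal on $ux_2$.
\end{itemize}
To get around the obstacle, I would first try to use (LO$^-$) to make $ux_2$ or $zu$ common. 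The vertices $z$ and $x_2$ are at rainbow distance at most two via $z\,u\,x_2$ in colors $a,b$. So either $zx_2\in E_\cap$, or $\delta(z)+\delta(x_2)\ge n-1$. In the first case the rotation closes directly through the common edge, using Lemma~\ref{lem:rotation-principle}. In the second case I would feed the degree bound into Step~2. Either way, the situation ends in a rainbow $C_n$ and hence a contradiction, which gives $N_{G_s}(z)\setminus\{u\}\subseteq N_{G_s}(u)$.

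\emph{Step 2: $t\le p$.} The inclusion gives $d_{G_s}(z)\le |N_{G_s}(u)\setminus\{z\}|+1=d_{G_s}(u)$. By Lemma~\ref{lem:critical-count}(2), $d_{G_s}(u)=\delta(u)=p$, so $t=\delta(z)\le d_{G_s}(z)\le p$.

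\emph{Step 3: equality.} If $t=p$, every inequality in Step~2 is tight, so $d_{G_s}(z)=p$. Then the inclusion $N_{G_s}(z)\setminus\{u\}\subseteq N_{G_s}(u)\setminus\{z\}$ is between two sets of size $p-1$, hence an equality. Since $zu\in G_s$, the vertices $u,z$ are adjacent twins in $G_s$.

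No connectivity is used anywhere: only the counting identities of Lemma~\ref{lem:critical-count}, recoloring, and one application of (LO$^-$) in Step~1.
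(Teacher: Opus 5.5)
Your Steps~2 and~3 are correct and agree with the closing lines of the paper's proof. Step~1, however, has a genuine gap, exactly at the obstacle you flag. The only case that matters is $2\le j\le m-2$: for $j=1$ the assumption $ux_2\notin G_s$ is false, since the first edge lies in $G_a\cap G_b$, and $j=m-1$ would give $w=v$. In that case your cycle $z\,x_{j+1}\cdots x_m\,x_j\cdots x_2\,u\,z$ must place the released color $c_j$ of $x_jx_{j+1}$. The two edges you can recolor, $zu$ and $ux_2$, are only known to lie in $G_a$, $G_b$ and the first-edge color, and $a,b$ are already on the chords.

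Your workaround does not address this. Applying \textup{(LO$^-$)} to the pair $(z,x_2)$ yields at best a common edge $zx_2$, which is not on the cycle. Rerouting through it still requires inserting $u$ via $zu$ and $ux_2$, with the same color shortage. The other branch, feeding $\delta(z)+\delta(x_2)\ge n-1$ into Step~2, is circular: Step~2 obtains $t\le p$ from the very inclusion under proof. You also give no reason why that inequality would produce a rainbow $C_n$.

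The missing idea is to bound the degree of $w=x_{j+1}$ itself.
\begin{itemize}
\item Rotate $P$ at the chord $x_jv$ in color $t$. This gives the path $x_1\cdots x_j\,x_mx_{m-1}\cdots x_{j+1}$ on $V(P)$, missing $s$ and $c_j$, with ends $u$ and $w$.
\item Lemma~\ref{lem:critical-count}(2) gives $\delta(w)\le q$. Equality is excluded by \eqref{eq:critical-attachment}, because $zu,zw\in G_s$: coloring $zu$ with $s$ and $zw$ with $c_j$ would close a Hamilton cycle.
\item The paper then uses \textup{(LO$^-$)} to make the chord $zw$ common. Then $zw$ absorbs $c_j$ and $uz$ takes $s$, which is your cycle with a different coloring.
\end{itemize}
This last step needs $t+\delta(w)<n-1$. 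That is why the paper reverses your order and proves $t\le p$ first, by a separate argument. Assuming $t>p$, it uses a crossing count on the Hamilton path $v\cdots u\,z$ and two applications of \textup{(LO$^-$)} to reach a contradiction with Lemma~\ref{lem:critical-count}(1).

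Your order can in fact be kept. The path $u\,z\,w$ is rainbow in colors $t,s$, and $\delta(u)+\delta(w)\le n-2$. So \textup{(LO$^-$)} makes $uw$ common, hence $ux_{j+1}\in G_s$, contradicting $j\notin A_s$. This proves the inclusion without using $t\le p$, and your Steps~2--3 then go through unchanged. As written, though, Step~1 is not a proof.
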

\begin{proof}
First suppose $t>p$, and write the path in the reverse orientation as
$x_1\cdots x_m$, with $\delta(x_1)=q$, $\delta(x_m)=p$.
In $H=x_1\cdots x_mz$, use $a$ on $x_mz$.
With $H_i=x_i$ for $i\le m$ and $H_{m+1}=z$, the sets
$\{j:x_1H_{j+1}\in G_a\}$ and $\{j:H_jz\in G_b\}$ in $[m]$
have total size at least $q+t\ge n>m$.
Their intersection yields $2\le j\le m-2$ with
$x_1x_{j+1}\in G_a$ and $zx_j\in G_b$.
Rotating $P$ at the first chord exposes $x_j$ and still misses $b$.
Lemma~\ref{lem:critical-count}(2) gives $\delta(x_j)\le q$; equality and \eqref{eq:critical-attachment}
are incompatible with both $zx_j,zx_m\in G_b$.
Thus $\delta(x_j)<q$, and $x_m,z,x_j$ makes $x_mx_j$ common by \textup{(LO$^-$)}.
Writing $c_i$ for the original color of $x_ix_{i+1}$, the path
\[
 x_{m-1}\cdots x_{j+1}x_1\cdots x_jx_mz
\]
is rainbow: its new edges use $a,c_{m-1},b$, respectively.
It contains a common edge, so Lemma~\ref{lem:critical-count}(1) gives
$\delta(x_{m-1})+t\le n-1$.
Hence $p+\delta(x_{m-1})<n-1$, making $x_{m-1}x_m$ common.
The original $H$ now contradicts Lemma~\ref{lem:critical-count}(1), since its
endpoint sum is $q+t\ge n$. This proves $t\le p$.

Return to the orientation $u=x_1$, $v=x_m$. If $j\in B_b$ and
$zx_{j+1}\in G_a$, rotate at $x_jv$ to expose $w=x_{j+1}$.
Then $\delta(w)\le q$. Equality contradicts \eqref{eq:critical-attachment} for this rotated
path. Otherwise $t+\delta(w)<p+q=n-1$, making $zw$ common.
But
\[
 u,z,w,x_{j+2},\ldots,v,x_j,x_{j-1},\ldots,x_2,u
\]
is a rainbow Hamilton cycle: the three new edges use $a,c_j,b$.
Thus $zw\in G_a$ implies $j\in A_a$, proving the inclusion.
Since $|A_a|=p-1$, it also gives $d_{G_a}(z)\le p$.
If $t=p$, equality forces the twin identity. Exchange $a,b$.
\end{proof}

\begin{lemma}[Critical cycle domination]\label{lem:critical-insertion}
Let $C$ be a rainbow cycle on $V\setminus\{v\}$ containing a common
edge. If there is no rainbow Hamilton cycle and
$\delta(v)\ge(n-1)/2$, then $n-1$ is even,
$\delta(v)=(n-1)/2$, and $d^c(v,x)\le2$ for every $x\in V(C)$.
This statement does not require \textup{(LO$^-$)} or connectivity.
\end{lemma}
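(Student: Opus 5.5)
Write $C=y_1y_2\cdots y_{n-1}y_1$ and use its coloring $\phi$. Since $C$ has $n-1$ edges, exactly one color $r$ is missing from $C$. Let $y_ky_{k+1}$ be the common edge, of color $k'$. Recoloring it with $r$ releases $k'$. So there are two rainbow colorings of $C$, missing $r$ and $k'$ respectively, and each cycle edge can then be deleted to free a second color. The plan is the standard insertion count of the classical argument (a vertex of degree at least $(n-1)/2$ outside a cycle of length $n-1$ can be inserted unless its neighbours alternate), done with two colors.

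\textbf{Step 1 (counting).} Fix the missing color $r$. For $y_i\in N_{G_r}(v)$, delete the edge $y_iy_{i+1}$ of color $c_i$. The path $y_{i+1}\cdots y_i$ is rainbow and misses $r$ and $c_i$, and we attach $v$ to $y_i$ in color $r$. If also $vy_{i+1}\in G_{c_i}$, this gives a rainbow Hamilton cycle. We need a second color available on consecutive pairs uniformly. For this, use the common edge: when $i\ne k$, recolor $y_ky_{k+1}$ with $c_i$, which frees $k'$.
\begin{itemize}
\item Let $A=\{i: vy_i\in G_r\}$ and $B=\{i: vy_{i+1}\in G_{k'}\}$, indices mod $n-1$. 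By the recoloring principle (Lemma~\ref{lem:rotation-principle}), $A\cap B=\varnothing$ except possibly at $i=k$.
\item At $i=k$ we delete the common edge itself, and colors $r,k'$ are both free. So $k\in A\cap B$ also gives a cycle.
\item Hence $A\cap B=\varnothing$, and $|A|+|B|\le n-1$. Also $|A|\ge d_{G_r}(v)\ge\delta(v)$ and $|B|\ge d_{G_{k'}}(v)\ge\delta(v)$.
\end{itemize}
This yields $2\delta(v)\le n-1$, so with the hypothesis $\delta(v)=(n-1)/2$ and $n-1$ is even.

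\textbf{Step 2 (equality structure).} At equality, $A$ and $B$ partition the $n-1$ cyclic positions, with $|A|=|B|=(n-1)/2$. Moreover $d_{G_r}(v)=d_{G_{k'}}(v)=\delta(v)$.

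\textbf{Step 3 (rainbow distance).} Take any $x=y_i\in V(C)$.
\begin{itemize}
\item If $i\in A$, then $vy_i\in G_r$ and $d^c(v,x)=1$.
\item If $i\notin A$, then $i\in B$, so $vy_{i+1}\in G_{k'}$. The path $v\,y_{i+1}\,y_i$ uses colors $k'$ and $c_i$. These are distinct unless $c_i=k'$, i.e. $i=k$.
\item If $i=k$, the edge $y_ky_{k+1}$ is common, so we may use color $r\ne k'$ on it instead.
\end{itemize}
Either way this is a rainbow path of length two, so $d^c(v,x)\le2$.

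\textbf{Main obstacle.} The delicate point is checking the recoloring legality in Step 1 for every $i$. The released color $c_i$ must be placed on the common edge, which is legal because it lies in every $G_j$. When $i=k$ or $i+1=k$, the case analysis must avoid assigning one color twice; for instance, at $i=k$, colors $r,k'$ go on the two new edges and nothing is recolored. I expect to handle $i=k$ separately and treat all other $i$ uniformly via Lemma~\ref{lem:rotation-principle}.
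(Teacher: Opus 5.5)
Your proof is correct and follows the paper's argument: the same two-color insertion count, using the missing color $r$ and the common edge's color, with the retained common edge recolored by the released color, gives $2\delta(v)\le n-1$ and hence the partition of the cyclic positions. The only difference is the last step. You route each $y_i$ that is not a $G_r$-neighbor of $v$ through $y_{i+1}$, directly from the partition, and use the common edge's flexibility at $i=k$. The paper instead also uses the reverse insertion to show that $A=B$ is a parity class, and then routes through a $G_r$-neighbor; your version is slightly more economical.
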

\begin{proof}
Write $C=y_1\cdots y_my_1$, $m=n-1$, and let $r$ be its missing
color and $c$ the color of a common edge. Put
$A=N_{G_r}(v)$ and $B=N_{G_c}(v)$, viewed as subsets of the cyclic
vertex sequence. If $y_i\in A$ and $y_{i+1}\in B$, insert $v$ between
them using $r,c$. If the common edge is retained, recolor it with the
deleted edge's color. This is a rainbow Hamilton cycle. The reverse
ordered pair is equally impossible.
Consequently, with indices modulo $m$,
\[
 \mathbf1_{y_i\in A}+\mathbf1_{y_{i+1}\in B}\le1,
 \qquad
 \mathbf1_{y_i\in B}+\mathbf1_{y_{i+1}\in A}\le1.
\]
Since $|A|,|B|\ge\delta(v)\ge m/2$, equality holds in each inequality
and $|A|=|B|=m/2$. The indicators of $A$ are $2$-periodic, so $A=B$
is one parity class of the even cycle. Every vertex in $A$ is adjacent
to $v$ in color $r$. A vertex outside $A$ has a cycle neighbor in $A$;
its cycle edge has color different from $r$, giving a rainbow two-step
path to $v$. This proves the assertion.
\end{proof}

\subsection{Closure on a prescribed vertex set}

\begin{theorem}[Prescribed-set closure]\label{thm:prescribed-closure}
Let $n\ge4$. Suppose that every member is $2$-connected, that $\G$
satisfies \textup{(LO$^-$)}, and that there is no rainbow Hamilton cycle.
If $P=x_1\cdots x_{n-1}$ is a rainbow path with
\[
 \delta(x_1)+\delta(x_{n-1})\ge n-1,
\]
then its originally specified vertex set $V(P)$ spans a rainbow cycle.
\end{theorem}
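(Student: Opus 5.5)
We argue by contradiction: suppose $V(P)$ spans no rainbow cycle. Let $z$ be the omitted vertex and $a,b$ the two missing colors. Lemma~\ref{lem:critical-count}(2) applies and gives $\delta(x_1)+\delta(x_{n-1})\le n-1$, so the hypothesis forces equality. We orient $P$ as in \eqref{eq:critical-attachment}, with $u=x_1$, $v=x_{n-1}$, $p=\delta(u)$, $q=\delta(v)$ and $t=\delta(z)$. Lemma~\ref{lem:critical-twins} then gives $t\le p$ and $N_{G_s}(z)\setminus\{u\}\subseteq N_{G_s}(u)$ for $s=a,b$. The plan has two steps: first force $t=p$, so that $u,z$ are adjacent twins in $G_a$ and $G_b$; then show that the twin structure, propagated along $P$, produces a cut vertex in $G_a$, contradicting $2$-connectivity.

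\emph{Step 1: forcing $t=p$.} Suppose $t<p$. The path $z\,u\,x_2\cdots x_{n-1}$, with $zu$ colored $a$, is a rainbow Hamilton path ending at $v$.
\begin{itemize}
\item By Lemma~\ref{lem:critical-count}, both terminal edges $ux_2$ and $x_{n-2}v$ of $P$ lie in $G_a\cap G_b$.
\item I would use (LO$^-$) to show that the new path contains a common edge. The pair $z,x_2$ has $d^c(z,x_2)\le2$ via $u$. If $t+\delta(x_2)<n-1$, then $zx_2\in E_\cap$. Otherwise $\delta(x_2)$ is large, and since $x_2$ is the exposed endpoint after a rotation at a chord $ux_{j+1}$, Lemma~\ref{lem:critical-count}(2) bounds it.
\item With a common edge present, Lemma~\ref{lem:critical-count}(1) applied to $z\cdots v$ gives $t+q\le n-1$. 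This is consistent, so it alone does not close the case.
\item I would instead compare the set of omitted vertices: the path $x_2\cdots x_{n-1}$ extended by $z$ through a rotation. Then apply Lemma~\ref{lem:critical-count}(2) to a rainbow $(n-1)$-path that omits $u$ and has endpoints $z$-side and $v$. Its endpoint sum must be at most $n-1$, and with equality it must obey the attachment rule \eqref{eq:critical-attachment} with the roles of $u$ and $z$ swapped. Combining $t\le p$ from both orientations should give $t=p$.
\end{itemize}

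\emph{Step 2: twins lead to a cut vertex.} With $t=p$, the vertices $u$ and $z$ are adjacent twins in $G_a$ and $G_b$. Every rotation of $P$ at a chord $ux_{j+1}\in G_a$ exposes a new endpoint $x_j$ while keeping $a$ or $b$ missing. Lemma~\ref{lem:critical-count}(2) bounds $\delta(x_j)\le q$.
\begin{itemize}
\item If $\delta(x_j)=q$, the equality structure transfers: $x_j$ inherits the neighborhood of $v$ in the missing colors. The attachment rule forces $zx_j\notin G_a\cup G_b$.
\item If $\delta(x_j)<q$, then (LO$^-$) makes $vx_j$ common along the two-step rainbow path $v\,z\,x_j$ or $v\,x_{j+1}\,x_j$. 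Lemma~\ref{lem:critical-count}(1) applied to a Hamilton path through $z$ then yields a contradiction, as in the proof of Lemma~\ref{lem:critical-twins}.
\end{itemize}
So every exposed endpoint has degree exactly $q$ and shares $v$'s missing-color neighborhood, while $u$ and $z$ share theirs. This is the same closed-neighborhood rigidity exploited in Lemma~\ref{lem:trapped}. I expect it to show that $N_{G_a}(\{u,z\})$ lies in a single block separated from the $v$-side by one vertex, probably $x_2$. That would give a cut vertex of $G_a$.

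The main obstacle is Step 2. The rotations must be iterated while preserving the equality case of Lemma~\ref{lem:critical-count}(2), with careful color bookkeeping through the third color $c$ of the terminal edge. I would first try to run Lemma~\ref{lem:trapped} with $L=\{w:\delta(w)<q\}$ and $M=V$, verifying its conditions (1) and (2) from the equality identities; failing that, I would argue the cut vertex directly from the partition $A_s\sqcup B_t=[m-1]$.
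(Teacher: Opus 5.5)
Your skeleton (reduce to equality, force $t=p$, then find a cut vertex) matches the paper's. However, neither step is actually supplied, and Step~2 reverses the roles of $u$ and $v$.

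\textbf{Step 1.} You omit the preliminary inequality $p\le q$. The paper proves it first, applying Lemma~\ref{lem:trapped} to the reversed path with $L=M=\{w:\delta(w)\le q\}$. It is used repeatedly:
\begin{itemize}
\item from $t<p$, the edge $zu$ becomes common only if $t+p<p+q$, i.e.\ $t<q$;
\item Lemma~\ref{lem:critical-insertion} needs $q\ge m/2$;
\item the common edges $uw,zw$ in the final step need $2p-1<m$.
\end{itemize}
Your route to $t=p$ also fails. When $t<p$, a path omitting $u$ with endpoints $z$ and $v$ has endpoint sum $t+q<n-1$. It is therefore not in the equality case, and Lemma~\ref{lem:critical-twins} says nothing about it; ``combining both orientations'' presupposes the conclusion. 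Moreover, the hypothesis forbids a rainbow cycle only on $S=V(P)$. So Lemma~\ref{lem:critical-count}(2) cannot be applied to paths on $V\setminus\{u\}$ unless cycles there are excluded separately.

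The paper instead runs a second trapping argument. It first notes $zu\in E_{\cap}$ and $d^c(z,v)>2$; otherwise $zv$ would be common and close $P$ through $z$ in colors $a,b$. It then applies Lemma~\ref{lem:trapped} to the prefix $H-v$ of $H=zux_2\cdots x_m$, with $L=\{w:\delta(w)<p\}$ and $M=\{w:\delta(w)\le p\}$.
\begin{itemize}
\item The no-escape condition uses Lemma~\ref{lem:critical-insertion} to turn a cycle on $V\setminus\{v\}$ through $zu$ into $d^c(z,v)\le2$.
\item Condition~(2) uses Lemma~\ref{lem:critical-count}(1) to get $\delta(w)\le p$. At equality, the shortcut \eqref{eq:critical-shortcut} returns to a path on $S$. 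There the attachment rule of Lemma~\ref{lem:critical-count}(2) together with \textup{(LO$^-$)} places $w$ in $N_{G_b}(z)$.
\end{itemize}

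\textbf{Step 2.} Rotating at $ux_{j+1}$ replaces $u$ by $x_j$ while $v$ stays an endpoint. So Lemma~\ref{lem:critical-count}(2) gives $\delta(x_j)\le p$, not $\le q$. At equality $x_j$ plays the role of $u$, and since $zv\notin G_b$ the attachment rule forces $zx_j\in G_b$. This is the opposite of your claim $zx_j\notin G_a\cup G_b$. Your assertion that a low exposed endpoint gives an immediate contradiction is also unsupported; the paper must carry such vertices to the end.

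The missing idea is that $B=N_{G_b}(z)$ is closed under predecessors along $P$. The equality case follows from the attachment rule, and the low case from \textup{(LO$^-$)} along $u,x_r,w$ and $z,u,w$. Hence $B=\{x_1,\ldots,x_p\}$, and the cut vertex is $h=x_p$ of $G_b$, which is $x_2$ only when $p=2$. One must then exclude every $G_b$-edge between $K=\{z,x_1,\ldots,x_{p-1}\}$ and $R=\{x_{p+1},\ldots,x_m\}$:
\begin{itemize}
\item Degree-$p$ vertices of $K$ satisfy $N_{G_b}(x_i)=(B\setminus\{x_i\})\cup\{z\}$ by Lemma~\ref{lem:critical-twins}.
\item A low vertex $w\in K$ has $\delta(w)=p-1$, is commonly joined to the rest of $K$, and satisfies $d^c(w,v)>2$. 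This forces $N_{G_i}(v)=(R\setminus\{v\})\cup\{h\}$ in every color, hence $\delta(y)\le q$ on $R$. Then \textup{(LO$^-$)} would give $w$ a $p$th common neighbor.
\end{itemize}
Your fallback of Lemma~\ref{lem:trapped} with $M=V$ is unavailable. Its hypothesis needs common edges from $L$ to every vertex of $M$ within rainbow distance two, and \textup{(LO$^-$)} gives none toward high-degree vertices.
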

\begin{proof}[Proof of Theorem~\ref{thm:prescribed-closure}]
Suppose $S=V(P)$ does not span a rainbow cycle, while there is no
rainbow Hamilton cycle. Lemma~\ref{lem:critical-count}(2) reduces the endpoint
sum to equality. Orient
\[
 P=x_1\cdots x_m,\quad m=n-1,
 \quad u=x_1,\quad v=x_m,\quad V\setminus S=\{z\},
\]
as in \eqref{eq:critical-attachment}, with missing colors $a,b$, and put
$p=\delta(u)$, $q=\delta(v)$, $t=\delta(z)$.
Thus $p+q=m$ and $p,q\ge2$, so $n\ge5$.

\emph{The attached endpoint is a smaller endpoint: $p\le q$.}
If $p>q$, apply Lemma~\ref{lem:trapped} to the reverse path
$v\cdots u$, using the palette consisting of its path colors and $b$.
Color $a$ stays missing. For every candidate path $Q$ ending at $u$,
its first endpoint $w$ has degree at most $q$, by Lemma~\ref{lem:critical-count}(2).
A missing-color edge from $w$ to $u$ closes a cycle on $S$; one from
$w$ to $z$, together with $zu$ in color $a$, closes a Hamilton cycle.
Thus the no-escape condition holds. Set
$L=M=\{w:\delta(w)\le q\}$; \textup{(LO$^-$)} supplies common edges since
$2q<p+q=m$. The fan lemma gives a contradiction.

\emph{The omitted vertex has the same degree: $t=p$.}
Lemma~\ref{lem:critical-twins} gives $t\le p$. Suppose $t<p$.
Then $zu$ is common, since $t+p< p+q=m$.
Also $d^c(z,v)>2$: otherwise \textup{(LO$^-$)}, using $t+q<m$, would make
$zv$ common and close $P$ through $z$ with colors $a,b$.
Let $\ell$ be the last-edge color of $P$. Endpoint-neighborhood rigidity
and Lemma~\ref{lem:critical-twins} give
\[
 zq'\in G_a\cup G_b,\quad q'\ne u
 \quad\Longrightarrow\quad uq'\in G_\ell.             \tag{S'}\label{eq:critical-shortcut}
\]
Prepend $z$ to $P$ in color $a$, making a Hamilton path $H$ missing $b$.
Apply Lemma~\ref{lem:trapped} to its prefix $H-v$, protecting the common edge
$zu$, with the prefix colors plus $b$ as palette. The color $\ell$
lies outside this palette and colors the unchanged final edge of $H$.
Take
\[
 L=\{w:\delta(w)<p\},\qquad M=\{w:\delta(w)\le p\}.
\]
The common-edge condition follows from $2p-1<m$ and \textup{(LO$^-$)}.

We verify the no-escape condition for a candidate prefix $Q$ whose
missing palette color is $r$. An edge from its first endpoint $w$ to $v$ in
color $r$ closes a Hamilton cycle after appending the last edge of $H$.
An edge to the fixed last vertex of $Q$ in color $r$ closes a cycle on
$V\setminus\{v\}$ containing the protected common edge $zu$.
Since $q\ge m/2$, Lemma~\ref{lem:critical-insertion} would imply
$d^c(z,v)\le2$, the same contradiction. This verifies condition (1)
using only the absence of a cycle on the original set $S$.

For condition (2), append the final edge of $H$ to $Q$. The resulting
Hamilton path contains $zu$, so Lemma~\ref{lem:critical-count}(1) gives
$\delta(w)+q\le m$, hence $\delta(w)\le p$.
If equality holds, recolor its terminal edge with its missing color,
releasing $\ell$. Delete $z$ and shortcut its two neighbors in color
$\ell$, using \eqref{eq:critical-shortcut}. This produces a rainbow path
on the original $S$, ending at $w,v$, with endpoint degree sum $m$.
Since $zv$ is absent in every color, \eqref{eq:critical-attachment} forces $zw$ to exist.
As $t+p<m$, it is common; in particular $w\in N_{G_b}(z)$.
Thus an exposed endpoint outside $N_{G_b}(z)$ has degree below $p$,
which is exactly condition (2). The fan lemma contradicts $t<p$.
We conclude $t=p$. The missing-layer twin identities now apply to $u,z$.

\emph{A predecessor-closed neighborhood gives a vertex cut.}
Put $B=N_{G_b}(z)$, so $|B|=p$, $u\in B$, $v\notin B$.
If $x_r\in B\setminus\{u\}$, twinhood and endpoint-neighborhood rigidity
give $ux_r\in G_a$. Rotate at this chord, exposing $w=x_{r-1}$ and
still missing $b$. Lemma~\ref{lem:critical-count}(2) gives $\delta(w)\le p$.
If equality holds, \eqref{eq:critical-attachment} gives $zw\in G_b$.
If $\delta(w)<p$, the two-step paths $u,x_r,w$ and $z,u,w$ make
$uw,zw$ common, since $2p-1<m$. In either case $w\in B$.
For $r=2$ the rotation is simply a first-edge recoloring.
Thus $B$ is predecessor-closed along $P$, and
\[
\begin{gathered}
 B=\{x_1,\ldots,x_p\},\qquad K=\{z,x_1,\ldots,x_{p-1}\},\\
 h=x_p,\qquad R=\{x_{p+1},\ldots,x_m\}.
\end{gathered}                           \tag{K}\label{eq:critical-blocks}
\]
Here $|K|=p$, $|R|=q$, and $v\in R$.
Every $x_i$, $i<p$, is exposed by the rotation at $ux_{i+1}$,
so its degree is at most $p$.
If its degree is $p$, the rotated path and Lemma~\ref{lem:critical-twins} give
\[
 N_{G_b}(x_i)=(B\setminus\{x_i\})\cup\{z\}.
\]
Consequently neither $z$ nor any degree-$p$ vertex of $K$ has a
$G_b$-neighbor in $R$.

If there are no degree-below-$p$ vertices in $K$, this already makes
$h$ a cut vertex of $G_b$. For every such low vertex $w$,
the rotation just described makes $zw$ common. For every
$y\in K\setminus\{w,z\}$, the two-step path $w,z,y$ and \textup{(LO$^-$)} make
$wy$ common, since $\delta(y)\le p$. Thus $w$ has $p-1$ common
neighbors in $K$, and $\delta(w)=p-1$.
Fix one such $w$. It is also an endpoint of a rainbow path on $S$ ending at $v$ and
missing $b$. Hence $d^c(w,v)>2$: otherwise \textup{(LO$^-$)}, using
$\delta(w)+q=m-1<m$, gives a common closing edge on $S$.
Since $w$ is joined commonly to every other vertex of $K$, this implies
that $v$ has no neighbor in $K$ in any layer. But $\delta(v)=q$ and
the set outside $K\cup\{v\}$ has exactly $q$ vertices. Therefore
\[
 N_{G_i}(v)=(R\setminus\{v\})\cup\{h\}
 \quad\text{for every color }i.                       \tag{V}\label{eq:critical-vstar}
\]
In particular these edges are common. Rotating $P$ at $vx_i$ in the
missing color $b$, for $p\le i\le m-1$, exposes $x_{i+1}$ while
keeping $u$ fixed. Lemma~\ref{lem:critical-count}(2) gives
$\delta(y)\le q$ for every $y\in R$.

Now let $w$ be any low vertex of $K$ and $y\in R$.
If $wy\in G_b$, \textup{(LO$^-$)} makes it common, because
$\delta(w)+\delta(y)\le(p-1)+q=m-1<m$.
This adds a $p$th common neighbor to the $p-1$ already in $K$,
contradicting $\delta(w)=p-1$.
Thus no $G_b$-edge joins $K$ and $R$.
Deleting $h$ disconnects $G_b$, contrary to its $2$-connectivity.
The original $S=V(P)$ must therefore span a rainbow cycle.
\end{proof}

\subsection{The sharp near-spanning threshold}

\begin{lemma}\label{lem:ore-high}
Assume \textup{(LO$^-$)} and $2$-connectivity of every member, and suppose
that neither a rainbow $C_{n-1}$ nor $C_n$ exists. Put
$h=\lfloor n/2\rfloor$. No rainbow Hamilton path containing a common
edge has both endpoint system degrees at least $h$.
\end{lemma}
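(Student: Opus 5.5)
Suppose, for a contradiction, that $H=y_1\cdots y_n$ is a rainbow Hamilton path containing a common edge, with $\delta(y_1),\delta(y_n)\ge h$. The plan is to push the endpoint-counting equality of Lemma~\ref{lem:critical-count}(1) until it yields a rainbow $C_{n-1}$ or a cut vertex in some member.

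\emph{Step 1: equality.} By Lemma~\ref{lem:critical-count}(1), $\delta(y_1)+\delta(y_n)\le n-1$. Since both degrees are at least $h=\lfloor n/2\rfloor$:
\begin{itemize}
\item for even $n$ this is already impossible, because $2h=n>n-1$;
\item for odd $n$ it forces $\delta(y_1)=\delta(y_n)=h=(n-1)/2$, with equality in the count.
\end{itemize}
At equality, the sets $A_c=\{j:y_1y_{j+1}\in G_c\}$ and $B_r=\{j:y_jy_n\in G_r\}$ partition $[n-1]$, where $r$ is the missing color and $c$ the color of the common edge. Both terminal edges lie in $G_r$.

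\emph{Step 2: remove an endpoint.} Consider the prefix $P=H-y_n=y_1\cdots y_{n-1}$. It misses $r$ and the color $\ell$ of the last edge. Assuming no rainbow $C_{n-1}$, $V(P)$ spans no rainbow cycle. Also $n-1\ge 4$ once $n\ge5$; small $n$ are handled by Joos--Kim as in Theorem~\ref{thm2.4}.

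\begin{itemize}
\item If $\delta(y_{n-1})\ge h$, then $\delta(y_1)+\delta(y_{n-1})\ge n-1$. Theorem~\ref{thm:prescribed-closure} needs the absence of a rainbow Hamilton cycle, which holds by hypothesis, and then gives a rainbow cycle on $V(P)$. This is a rainbow $C_{n-1}$, a contradiction.
\item Hence $\delta(y_{n-1})<h$. Symmetrically, reversing $H$ gives $\delta(y_2)<h$.
\end{itemize}

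\emph{Step 3: rotations produce many low vertices.} For each $j\in B_r$ with $2\le j\le n-2$, rotate at the chord $y_jy_n$ in color $r$. This gives the rainbow Hamilton path $y_1\cdots y_jy_ny_{n-1}\cdots y_{j+1}$, which still contains the common edge (recolored if necessary). Its new endpoint is $y_{j+1}$ and its new penultimate vertex is $y_{j+2}$.

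\begin{itemize}
\item If $\delta(y_{j+1})\ge h$, this path satisfies the hypotheses of Step 2, so its penultimate vertex $y_{j+2}$ is low.
\item If $\delta(y_{j+1})<h$, then the count of Lemma~\ref{lem:critical-count}(1) gives $\delta(y_1)+\delta(y_{j+1})\le n-2$. Lemma~\ref{lem2.1} would not even be violated here.
\end{itemize}

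Either way, the rotations generate many vertices of degree below $h$. Since $|B_r|=h$, among the roughly $h$ positions produced (as in the set $I$ of Theorem~\ref{thm2.4}) there are two consecutive low vertices. Such a pair is at rainbow distance one, so by \textup{(LO$^-$)} the edge joining them is common, because $2(h-1)<n-1$. The same argument applies from the $y_1$ side using $A_c$.

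\emph{Step 4: finish.} The main obstacle is converting these low vertices and common edges into a contradiction, since degree sums now sit exactly at $n-1$ and Lemma~\ref{lem2.1} no longer bites. I would first try to apply Lemma~\ref{lem:trapped} to the prefix $P$, with palette the prefix colors plus $r$ and $L=M=\{w:\delta(w)<h\}$, after orienting so that the first endpoint is low.

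\begin{itemize}
\item Condition~(1) follows because a missing-color edge to $y_{n-1}$ closes a $C_{n-1}$, and one to $y_n$ closes a $C_n$ after appending the last edge.
\item Condition~(2) follows because appending the last edge to any candidate path gives a Hamilton path containing a common edge. The Step 2 dichotomy then forces its first endpoint to be low whenever its last endpoint is high.
\end{itemize}

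The lemma then contradicts 2-connectivity. If this orientation bookkeeping fails, the fallback is the parity/twin structure: use Lemma~\ref{lem:critical-insertion} on the cycle through $y_n$ to force an alternating neighborhood pattern, and derive a cut vertex as in the final part of Theorem~\ref{thm:prescribed-closure}.
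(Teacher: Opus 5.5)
\textbf{There is a genuine gap: your proposal has no valid finishing step.} Your Steps~1 and~2 agree with the paper. Equality in Lemma~\ref{lem:critical-count}(1) forces $n=2k+1$ and $\delta(y_1)=\delta(y_n)=k$. Then Theorem~\ref{thm:prescribed-closure} applied to $H-y_n$ forces $\delta(y_{n-1})<k$. Step~3 never produces a contradiction, and the rotation at $y_jy_n$ can delete the common edge when it is $y_jy_{j+1}$. The plan in Step~4 fails for two reasons:
\begin{itemize}
\item Lemma~\ref{lem:trapped} needs the first vertex of the prefix to lie in $L=\{w:\delta(w)<h\}$. The first vertex of $H-y_n$ is $y_1$, whose degree is exactly $h$. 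Reversing the prefix to put $y_{n-1}$ first detaches the fixed edge $y_{n-1}y_n$ from the final vertex. Condition~(1) is then lost, because a missing-color edge from the new first endpoint to $y_n$ only gives a Hamilton path, not a $C_n$.
\item Your justification of condition~(2) is circular. Step~2 only says that the \emph{penultimate} vertex is low. Lemma~\ref{lem:critical-count}(1), with a last endpoint of degree $h$, bounds the first endpoint only by $h$, not strictly below it. The claim ``the first endpoint is low whenever the last is high'' is exactly the statement you are proving. In the paper, this lemma is what verifies condition~(2) in the proof of Theorem~\ref{thm:ore-almost-spanning}. In Theorem~\ref{thm2.4}, condition~(2) comes from an extremal choice of path, which you do not have here. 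Nor is the common edge of $H$ protected in the candidate paths.
\end{itemize}
The fallback via Lemma~\ref{lem:critical-insertion} is too vague to assess.

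\textbf{The missing idea is a shortcut that uses your Step~2 output directly.}
\begin{itemize}
\item Suppose $y_1y_{n-1}$ is an edge of some layer. Then $d^c(y_1,y_{n-1})=1$ and $\delta(y_1)+\delta(y_{n-1})<2k=n-1$. By \textup{(LO$^-$)} this edge would be common, and it would close $H-y_n$ in the missing color $r$ into a rainbow $C_{n-1}$. So $y_1y_{n-1}$ is absent from every layer.
\item Now look at position $n-2$ of the equality partition $A_c\sqcup B_r=[n-1]$ from Lemma~\ref{lem:critical-count}(1). It cannot lie in $A_c$, so it lies in $B_r$, which means $y_{n-2}y_n\in E(G_r)$.
\item Delete $y_{n-1}$ and join $y_{n-2}y_n$ in color $r$. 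The result is a rainbow path on $V\setminus\{y_{n-1}\}$ with endpoints $y_1,y_n$ and endpoint degree sum $2k=n-1$.
\item Theorem~\ref{thm:prescribed-closure} then yields the forbidden rainbow $C_{n-1}$.
\end{itemize}
No trapping lemma or further rotations are needed.
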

\begin{proof}
For even $n$, Lemma~\ref{lem2.1} proves the assertion. For odd $n=2k+1$,
Lemma~\ref{lem:critical-count}\textup{(1)} forces both endpoints of
such a path $H=y_1\cdots y_n$ to have degree $k$ and forces equality
in that lemma. Write $r$ for the missing color and $w=y_{n-1}$.
If $\delta(w)\ge k$, Theorem~\ref{thm:prescribed-closure} closes the
prefix $H-y_n$, a contradiction. Thus $\delta(w)<k$.
An existing edge $y_1w$ would be common by \textup{(LO$^-$)} and would
close the same prefix in color $r$; hence it is absent in every layer.
In the equality partition in Lemma~\ref{lem:critical-count}\textup{(1)},
the position $n-2$ now forces $y_{n-2}y_n\in E(G_r)$. Delete $w$ and
join its two neighbors with color $r$. The resulting rainbow path on
$n-1$ vertices has endpoint degree sum $2k=n-1$, so
Theorem~\ref{thm:prescribed-closure} again gives a forbidden $C_{n-1}$.
\end{proof}

\begin{proof}[Proof of Theorem~\ref{thm:ore-almost-spanning}]
For $n=3$ every member is $K_3$. For $n=4$, the transversal Dirac theorem
applies. Suppose $n\ge5$ and neither long cycle exists, and put
$h=\lfloor n/2\rfloor$. By Theorem~\ref{thm2.3}, choose a rainbow
Hamilton path $P=v_1\cdots v_n$ maximizing
$\delta(v_1)+\delta(v_{n-1})$, with missing color $r$.
Theorem~\ref{thm:prescribed-closure} gives
\[
 \delta(v_1)+\delta(v_{n-1})\le n-2,
 \qquad \min\{\delta(v_1),\delta(v_{n-1})\}<h.
\]
If $\delta(v_1)<h$, apply Lemma~\ref{lem:trapped} to the prefix
$P-v_n$, using its path colors and $r$ as palette, with
$L=M=\{w:\delta(w)<h\}$. Condition~\textup{(1)} follows from the
absence of $C_{n-1},C_n$, and extremality gives condition~\textup{(2)}
after appending the unchanged final edge. Common edges between $L$ and
$M$ are supplied by $2(h-1)<n-1$ and \textup{(LO$^-$)}. This is
impossible. Hence $\delta(v_1)\ge h$ and $\delta(v_{n-1})<h$.

If $\delta(v_n)<h$, the last edge is common. Reverse $P$ and apply
the same lemma to its prefix, with the same $L=M$. In
condition~\textup{(2)}, append the fixed final edge to obtain a
Hamilton path containing the protected common edge and ending at $v_1$.
Lemma~\ref{lem:ore-high} forces its other endpoint below $h$, proving
that condition and giving another contradiction.

It remains that $\delta(v_1),\delta(v_n)\ge h$. As in the proof of
Theorem~\ref{thm2.4}, right rotations show that
\[
 I=\{i+2:v_iv_n\in E(G_r),\ 3\le i\le n-3\}\subseteq[5,n-1]
\]
consists of low-degree indices and has size at least $h-1$.
For $n=5$ the neighbor restriction is impossible. For $n\ge6$,
$h-1>\lceil(n-5)/2\rceil$, so $I$ has consecutive indices.
Their path edge is common by \textup{(LO$^-$)}, contradicting
Lemma~\ref{lem:ore-high} for the original $P$.
\end{proof}

\paragraph{Sharpness and scope.}
For odd $n=2k+1\ge5$, the diagonal system with every member equal to
$K_{k,k+1}$ satisfies \textup{(LO$^-$)}: the only nonedges lie within
a part, and their degree sums are $2k=n-1$ or $2k+2$.
It has no Hamilton cycle, so a Hamilton conclusion at threshold $n-1$
is false without an additional hypothesis.
For even $n=2k\ge6$, the diagonal system $K_{k-1,k+1}$ satisfies
the weaker implication with threshold $n-2$, but its longest cycle has
length $n-2$. Thus $n-1$ cannot be lowered uniformly in
Theorem~\ref{thm:ore-almost-spanning}.
It also prevents lowering both the local Ore and endpoint thresholds
to $n-2$ in a prescribed-set statement:
an alternating path using all $k-1$ vertices of the smaller part and
$k$ vertices of the larger part has $n-1$ vertices and endpoint degree
sum $n-2$, while its vertex set has odd size and spans no cycle.
This example fails \textup{(LO$^-$)} and therefore does not show that
the endpoint threshold alone is sharp when \textup{(LO$^-$)} is retained.
Neither endpoint sharpness under that retained hypothesis nor a separate
optimal local threshold for each parity is asserted here.

\section{The structural obstruction}\label{sec:structural-proof}

Throughout this section, every $G_i$ is $2$-connected, $\G$ satisfies
\textup{(RF)}, and no rainbow Hamilton cycle exists. For $n\in\{3,4\}$,
every $2$-connected $n$-vertex graph has minimum degree at least $n/2$, so the
transversal Dirac theorem of Joos and Kim~\cite{Joos2020} already yields a
rainbow Hamilton cycle. We may therefore assume $n\ge5$. By
Theorem~\ref{thm2.4}, $\G$ contains a rainbow cycle of length $n-1$.
Whenever
\[
C=v_1v_2\cdots v_{n-1}v_1
\]
is such a cycle, subscripts are interpreted modulo $n-1$. For
$a\in\mathbb Z$, let $\langle a\rangle$ be the unique element of $[n-1]$
congruent to $a$ modulo $n-1$. To avoid repeated brackets, an index arising
from the cyclic labeling of $C$ is written as $v_a$, $G_a$, or ``color $a$''
to mean $v_{\langle a\rangle}$, $G_{\langle a\rangle}$, or color
$\langle a\rangle$, respectively. The distinguished missing label $n$ is
never reduced modulo $n-1$.

The proof has three stages. Claims~1--4 establish separation properties for
vertices of system degree below $n/2$. Claims~5--6 then show that the vertex
omitted by a rainbow $(n-1)$-cycle has system degree at least $n/2$ and, more
importantly, that every vertex can occur as the omitted vertex of such a
cycle. Finally, we encode the available replacements along a fixed
$(n-1)$-cycle in an auxiliary digraph. Retaining the maximum-degree
differences and the color-degree surpluses strengthens its double count
to the degree-sequence bound.

\subsection{Low-degree separation}

\paragraph{Claim 1: consecutive low vertices force low endpoints.}
Let $P=x_1\cdots x_n$ be a rainbow Hamilton path containing consecutive
vertices $x_p,x_{p+1}$ of system degree below $n/2$. Both endpoints
then have system degree below $n/2$.
\begin{proof}
The edge $x_px_{p+1}$ is common by \textup{(RF)}. Suppose by symmetry
that $\delta(x_n)\ge n/2$. Lemma~\ref{lem2.1} gives
$\delta(x_1)<n/2$. Normalize the path colors to $1,\ldots,n-1$,
so color $n$ is missing. No closing edge lies in $G_n$.
Minimum degree two gives $x_1x_k\in E(G_n)$ for some $3\le k\le n-1$.
Rotate to $x_{k-1}\cdots x_1x_k\cdots x_n$.
If the original common edge is deleted, its exposed endpoint is one
of $x_p,x_{p+1}$ and is low. Otherwise the common edge is retained,
and Lemma~\ref{lem2.1} gives the same conclusion. Thus (RF), applied
to $x_1x_kx_{k-1}$, makes $x_1x_{k-1}$ common.
Descending rotations along these new common chords, each of which
retains a common edge, yield $x_1x_2\in E_{\cap}$.

Apply Lemma~\ref{lem:trapped} to the full path $P$, with palette $[n]$
and $L=M=\{w:\delta(w)<n/2\}$. Condition~\textup{(1)} is the absence
of a rainbow Hamilton cycle. Every path in condition~\textup{(2)}
retains the common edge $x_1x_2$ and ends at the high vertex $x_n$;
Lemma~\ref{lem2.1} therefore puts its first endpoint in $L$.
The fan lemma contradicts $2$-connectivity. The same reasoning after
reversal proves the assertion for the other endpoint.
\end{proof}

\paragraph{Claim 2: a high penultimate vertex forces an $(n-1)$-cycle.}
	Let $P=x_1x_2\cdots x_n$ be a rainbow Hamilton path such that
	\[
	\delta(x_{n-1})\ge\frac n2
	\qquad\text{and}\qquad
	d^c(x_1,x_n)\le2.
	\]
	Then $\G$ contains a rainbow cycle on
	$\{x_1,\ldots,x_{n-1}\}$.
	
	\begin{proof}
		Suppose otherwise. Globally permute the member labels so that
		$x_ix_{i+1}$ has color $i$. By Lemma~\ref{lem2.2},
		\[
		\delta(x_1)+\delta(x_{n-1})<n,
		\]
		so $\delta(x_1)<n/2$.
		
		Neither $x_1x_n$ nor $x_1x_{n-1}$ belongs to $G_n$, since these
		edges would give a rainbow cycle of length $n$ and $n-1$,
		respectively. Hence there exists $k\in[3,n-2]$ with
		$x_1x_k\in E(G_n)$. Applying Lemma~\ref{lem2.2} to the first
		$n-1$ vertices of
		\[
		x_{k-1}\cdots x_1x_k\cdots x_{n-1}x_n
		\]
		gives $\delta(x_{k-1})<n/2$, and therefore
		$x_1x_{k-1}\in E_{\cap}$ by (RF). Descending rotations yield
		\[
		\delta(x_i)<\frac n2,\qquad
		x_1x_i\in E_{\cap}
		\qquad(2\le i\le k-1).
		\]
		In particular, $\delta(x_2)<n/2$.
		
		Thus $P$ contains the consecutive low-degree vertices $x_1,x_2$.
		Claim~1 gives $\delta(x_n)<n/2$. Since
		$d^c(x_1,x_n)\le2$, (RF) yields $x_1x_n\in E_{\cap}$, which closes
		$P$ into a rainbow Hamilton cycle, a contradiction.
	\end{proof}

	\paragraph{Claim 3: low vertices are not consecutive on an $(n-1)$-cycle.}
	No rainbow $(n-1)$-cycle contains two consecutive vertices of system
	degree below $n/2$.
	
	\begin{proof}
		Let
		\[
		C=x_1x_2\cdots x_{n-1}x_1
		\]
		be a rainbow $(n-1)$-cycle, and suppose
		$\delta(x_i),\delta(x_{i+1})<n/2$. Let $y$ be the vertex outside
		$C$, and assume that $C$ uses colors $1,\ldots,n-1$.
		
		Since $G_n$ is $2$-connected, $d_{G_n}(y)\ge2$. Choose
		$x_j\in N_{G_n}(y)$ with $j\ne i$.
		Deleting $x_jx_{j+1}$ and adding $yx_j$ in color $n$ gives a
		rainbow Hamilton path
		\[
		yx_jx_{j-1}\cdots x_{j+1}
		\]
		still containing the low-degree edge $x_ix_{i+1}$. Claim~1 gives
		$\delta(y),\delta(x_{j+1})<n/2$. Since
		$yx_jx_{j+1}$ is a rainbow path of length two, (RF) gives
		$yx_{j+1}\in E_{\cap}$; assigning this edge color $j$ yields a
		rainbow Hamilton cycle, a contradiction.
	\end{proof}

	Choose a rainbow $(n-1)$-cycle
	\[
	C=v_1v_2\cdots v_{n-1}v_1
	\]
	so that the system degree of its omitted vertex $u$ is maximum.
	Globally permute the member labels so that $v_iv_{i+1}$ has color $i$;
	thus $C$ misses color $n$.
	
	\paragraph{Claim 4: no common edge joins two low vertices.}
	If $\delta(x),\delta(y)<n/2$, then $xy\notin E_{\cap}$.
	
	\begin{proof}
		Suppose $xy\in E_{\cap}$. If $x=u$ and $y=v_j$, delete the
		cycle edge $v_{j-1}v_j$ and assign its released color $j-1$ to the
		common edge $uv_j$. Then
		\[
		uv_jv_{j+1}\cdots v_{j-1}
		\]
		is a rainbow Hamilton path containing the consecutive low-degree
		vertices $u,v_j$. Claim~1 gives
		$\delta(v_{j-1})<n/2$, contradicting Claim~3.
		
		Thus let $x=v_i$ and $y=v_j$. By Claim~3, $v_i$ and $v_j$ are
		nonconsecutive on $C$. Applying Claim~3 to the two
		cycle edges immediately preceding them gives
		\[
		\delta(v_{i-1}),\delta(v_{j-1})\ge\frac n2.
		\]
		Assign color $n$ to $v_iv_j$ and delete
		$v_{i-1}v_i$ and $v_{j-1}v_j$. This gives a rainbow path
		\[
		P=v_{i-1}\cdots v_jv_i\cdots v_{j-1}
		=x_1x_2\cdots x_{n-1}
		\]
		missing colors $i-1$ and $j-1$, where
		\[
		x_1=v_{i-1},\qquad x_{n-1}=v_{j-1},
		\]
		and $x_s=v_j,x_{s+1}=v_i$ for some $s$. Hence
		\[
		\delta(x_1),\delta(x_{n-1})\ge\frac n2,
		\qquad
		\delta(x_s),\delta(x_{s+1})<\frac n2.
		\]
		
		We have
		\begin{equation}\label{eq:claim4-u}
			u\notin N_{G_{i-1}}(x_1)\cup N_{G_{j-1}}(x_{n-1}),
		\end{equation}
		for otherwise adjoining $u$ in the corresponding missing color gives a
		Hamilton path containing the low-degree edge $x_sx_{s+1}$; Claim~1 then
		contradicts the high degree of the other endpoint. Also
		\begin{equation}\label{eq:claim4-end}
			i-1,j-1\notin c(x_1x_{n-1}),
		\end{equation}
		for otherwise $P$ can be closed into a rainbow $(n-1)$-cycle still
		containing $x_sx_{s+1}$, contrary to Claim~3.
		
		Set
		\[
		\begin{gathered}I_1=\{t\in[n-3]:i-1\in c(x_1x_{t+1})\},\\
I_2=\{t\in[2,n-2]:j-1\in c(x_tx_{n-1})\}.
\end{gathered}
		\]
		By \eqref{eq:claim4-u} and \eqref{eq:claim4-end},
		\[
		|I_1|\ge\delta(x_1),\qquad
		|I_2|\ge\delta(x_{n-1}),
		\]
		so $|I_1|+|I_2|\ge n$. Since
		$I_1\cup I_2\subseteq[n-2]$, we have
		$|I_1\cap I_2|\ge2$. Choose
		$m\in I_1\cap I_2$ with $m\ne s$.
		
		Replacing $x_mx_{m+1}$ by
		$x_mx_{n-1}$ and $x_1x_{m+1}$ in colors $j-1$ and $i-1$ gives
		the rainbow $(n-1)$-cycle
		\[
		x_1\cdots x_mx_{n-1}x_{n-2}\cdots x_{m+1}x_1.
		\]
		Since $m\ne s$, it still contains the low-degree edge
		$x_sx_{s+1}$, contradicting Claim~3.
	\end{proof}
	
	Thus
	\begin{equation}\label{eq:low-distance}
		\delta(x),\delta(y)<\frac n2
		\quad\Longrightarrow\quad
		d^c(x,y)>2.
	\end{equation}
	
	\subsection{The omitted vertex}

	From this point on, subscripts on the fixed cycle
	$C=v_1\cdots v_{n-1}v_1$ are read modulo $n-1$. When a displayed
	vertex sequence follows two oriented arcs of $C$, these are the indicated
	cyclic arcs. The index restrictions in each argument ensure that the two
	arcs have disjoint interiors and, together with $u$, cover every vertex
	exactly once.

\paragraph{Claim 5: the omitted vertex has high system degree.}
	The omitted vertex satisfies $\delta(u)\ge n/2$.
	
	\begin{proof}
		Suppose $\delta(u)<n/2$. By the choice of $u$,
		\begin{equation}\label{eq:max-u}
			\delta(v)\ge\frac n2
			\quad\Longrightarrow\quad
			\text{there is no rainbow $(n-1)$-cycle on }V\setminus\{v\}.
		\end{equation}
		
		Choose $v_i\in N_{G_n}(u)$. Since $uv_iv_{i-1}$ is rainbow,
		\eqref{eq:low-distance} gives
		$\delta(v_{i-1})\ge n/2$.
		
		We first show that $\delta(v_{i-2})<n/2$. Otherwise
		\[
		uv_iv_{i+1}\cdots v_{i-2}v_{i-1}
		\]
		is a rainbow Hamilton path whose first vertex is $u$, whose last
		vertex is $v_{i-1}$, and whose penultimate vertex $v_{i-2}$ has
		system degree at least $n/2$. Since $d^c(u,v_{i-1})\le2$,
		Claim~2 applies and gives a rainbow $(n-1)$-cycle on
		$V\setminus\{v_{i-1}\}$, contrary to \eqref{eq:max-u}.
		
		Hence $\delta(v_{i-2})<n/2$, and \eqref{eq:low-distance} gives
		$d^c(u,v_{i-2})>2$. Claim~3 then yields
		$\delta(v_{i-3})\ge n/2$. Also,
		\[
		\delta(v_{i-4})\ge\frac n2,
		\]
		for otherwise $v_{i-4}v_{i-3}v_{i-2}$ contradicts
		\eqref{eq:low-distance}.
		
		Since $d_{G_{i-1}}(u)\ge2$, choose $j\ne i$ with
		$uv_j\in E(G_{i-1})$. Then
		$uv_jv_{j-1}$ and \eqref{eq:low-distance} give
		$\delta(v_{j-1})\ge n/2$. Also
		\[
		j\notin\{i-1,i-2,i-3\},
		\]
		since otherwise $d^c(u,v_{i-2})\le2$. If $n=5$, the three indices
		$i-1,i-2,i-3$ are precisely all indices of the cycle distinct from
		$i$, contradicting the choice $j\ne i$. Hence, for the remainder of
		this claim, we may assume $n\ge6$.
		
		Consider
		\[
		P=v_{j-1}\cdots v_i\,u\,v_j\cdots
		v_{i-4}v_{i-3}v_{i-2}v_{i-1},
		\]
		where $v_iu$ has color $n$ and $uv_j$ has color $i-1$.
		Thus $P$ misses color $j-1$. Write
		\[
		P=x_1x_2\cdots x_n,
		\]
		where
		\[
		x_1=v_{j-1},\quad x_n=v_{i-1},\quad
		x_{n-3}=v_{i-4},\quad
		x_{n-2}=v_{i-3},\quad
		x_{n-1}=v_{i-2}.
		\]
		
		We have
		\[
		j-1\notin c(x_1x_{n-1})\cup c(x_1x_n),
		\]
		since the first possibility gives a rainbow $(n-1)$-cycle omitting $x_n$,
		where $\delta(x_n)\ge n/2$,  while the second gives a rainbow
		Hamilton cycle.
		
		Define
		\[
		I_1=\{t\in[n-3]:j-1\in c(x_1x_{t+1})\},\qquad
		I_2=\{t\in[n-1]:i-3\in c(x_tx_n)\}.
		\]
		The two displayed nonadjacencies show that every $G_{j-1}$-neighbor of
		$x_1$ lies among $x_2,\ldots,x_{n-2}$, and therefore
		$|I_1|=d_{G_{j-1}}(x_1)$. Also
		$|I_2|=d_{G_{i-3}}(x_n)$. Hence
		\[
		|I_1|\ge\delta(x_1)\ge\frac n2,\qquad
		|I_2|\ge\delta(x_n)\ge\frac n2.
		\]
		Viewing both sets as subsets of $[n-1]$, inclusion--exclusion gives
		$I_1\cap I_2\ne\varnothing$; choose
		$m\in I_1\cap I_2$. Since $m\le n-3$, there are two
		possibilities.
		
		If $m=n-3$, assign color $i-3$ to $x_{n-3}x_n$ and the missing
		color $j-1$ to $x_1x_{n-2}$. Relative to $P$, the path below omits
		the two edges $x_{n-3}x_{n-2}$ and $x_{n-2}x_{n-1}$. The latter is
		the unique color-$(i-3)$ edge of $P$, while color $j-1$ was missing
		from $P$. Thus the two new edges introduce no repeated color, and
		\[
		Q=x_{n-1}x_nx_{n-3}x_{n-4}\cdots x_1x_{n-2}
		\]
		is a rainbow Hamilton path with first vertex $x_{n-1}$, last vertex
		$x_{n-2}$, and penultimate vertex $x_1$. We have
		$\delta(x_1)\ge n/2$ and $d^c(x_{n-1},x_{n-2})=1$, so Claim~2
		gives a rainbow $(n-1)$-cycle on
		$V\setminus\{x_{n-2}\}=V\setminus\{v_{i-3}\}$, contradicting
		\eqref{eq:max-u}.
		
		If $m<n-3$, assign color $i-3$ to $x_mx_n$ and the missing color
		$j-1$ to $x_1x_{m+1}$. This time the omitted edges of $P$ are
		$x_mx_{m+1}$ and $x_{n-2}x_{n-1}$. Again, the latter is the unique color-$(i-3)$ edge of $P$, while color
		$j-1$ was absent from $P$. Hence the two inserted edges have colors
		distinct from all retained edges, and
		\[
		Q=x_{n-1}x_nx_mx_{m-1}\cdots x_1
		x_{m+1}\cdots x_{n-3}x_{n-2}
		\]
		is a rainbow Hamilton path with first vertex $x_{n-1}$, last vertex
		$x_{n-2}$, and penultimate vertex $x_{n-3}=v_{i-4}$. Since
		$\delta(x_{n-3})\ge n/2$ and $d^c(x_{n-1},x_{n-2})=1$, Claim~2
		again gives a rainbow $(n-1)$-cycle on
		$V\setminus\{x_{n-2}\}=V\setminus\{v_{i-3}\}$, contradicting
		\eqref{eq:max-u}. Therefore $\delta(u)\ge n/2$.
	\end{proof}

	\paragraph{Claim 6: every vertex can be omitted.}
	For every $w\in V$, $\G$ contains a rainbow cycle on
	$V\setminus\{w\}$.

\begin{proof}
	The fixed cycle $C$ proves the assertion for $w=u$. For a cycle vertex
	$w=v_r$, cyclically relabel the positions of $C$ so that $w$ becomes
	$v_2$, and apply the same cyclic permutation to the member labels used on
	$C$, fixing the missing label $n$. This global relabeling preserves system
	degrees, \textup{(RF)}, and the existence of rainbow cycles. It therefore
	suffices to consider $w=v_2$.

	Suppose otherwise. By Claim~5, $\delta(u)\ge n/2$. Set
\[
A_2=\{j:uv_j\in E(G_2)\},\qquad
B_n=\{j:uv_{j+1}\in E(G_n)\}.
\]
Since $|A_2|,|B_n|\ge n/2$ and both are subsets of the
$(n-1)$-element set $[n-1]$, their intersection is nonempty. Choose
$j\in A_2\cap B_n$. Thus
\[
uv_j\in E(G_2),\qquad uv_{j+1}\in E(G_n).
\]
If $j=2$, replace the color-$2$ edge $v_2v_3$ of $C$ by the
two-edge path $v_2uv_3$, using colors $2$ and $n$, respectively.
This gives a rainbow Hamilton cycle, so $j\ne2$.

	Applying the same pigeonhole argument to colors $1$ and $n$, choose
$k$ such that
\[
uv_k\in E(G_1),\qquad uv_{k+1}\in E(G_n).
\]
If $k=1$, replace the color-$1$ edge $v_1v_2$ by
$v_1uv_2$, using colors $1$ and $n$. Again this gives a rainbow Hamilton cycle, so $k\ne1$.

If $j=1$, then
\[
v_3v_4\cdots v_{n-1}v_1uv_2
\]
is a rainbow Hamilton path: use color $2$ on $v_1u$ and color $n$
on $uv_2$, while the retained cycle edges keep their colors. Its
penultimate vertex is $u$, which has system degree at least $n/2$,
and $d^c(v_3,v_2)=1$. Claim~2 therefore gives a rainbow cycle on
$V\setminus\{v_2\}$, a contradiction. Thus $j\ne1$.

If $k=2$, the Hamilton path
\[
v_1v_{n-1}v_{n-2}\cdots v_3uv_2
\]
uses color $n$ on $v_3u$ and color $1$ on $uv_2$, while the
retained cycle edges keep their colors. Its penultimate vertex is
again $u$, and $d^c(v_1,v_2)=1$, so Claim~2 gives a rainbow cycle on
$V\setminus\{v_2\}$, a contradiction. Thus $k\ne2$.

By \eqref{eq:low-distance},
\[
\max\{\delta(v_1),\delta(v_3)\}\ge\frac n2.
\]
Suppose first that $\delta(v_1)\ge n/2$. If $3\le j\le n-2$, then
\[
v_3v_4\cdots v_j\,u\,v_{j+1}\cdots v_{n-1}v_1v_2
\]
is a rainbow Hamilton path: the cycle edges of colors $2$ and $j$
are omitted, color $2$ is used on $v_ju$, and color $n$ is used on
$uv_{j+1}$. Thus its penultimate vertex is $v_1$. If $j=n-1$, use
instead
\[
v_3v_4\cdots v_{n-1}\,u\,v_1v_2.
\]
Here the omitted cycle-edge colors are $2$ and $n-1$,
$v_{n-1}u$ uses color $2$, and $uv_1$ uses color $n$; again the
penultimate vertex is $v_1$. In both cases $d^c(v_3,v_2)=1$, so
Claim~2 gives a rainbow cycle on $V\setminus\{v_2\}$, a
contradiction.

Therefore $\delta(v_3)\ge n/2$. We already know $k\notin\{1,2\}$.
If $3\le k\le n-2$, then
\[
v_1v_{n-1}\cdots v_{k+1}\,u\,v_kv_{k-1}\cdots v_3v_2
\]
is a rainbow Hamilton path: the cycle edges of colors $1$ and $k$
are omitted, color $n$ is used on $v_{k+1}u$, and color $1$ is used
on $uv_k$. Thus its penultimate vertex is $v_3$. If $k=n-1$, use
instead
\[
v_1\,u\,v_{n-1}v_{n-2}\cdots v_3v_2,
\]
where $v_1u$ uses color $n$ and $uv_{n-1}$ uses color $1$; again
the penultimate vertex is $v_3$. In both cases
$d^c(v_1,v_2)=1$, so Claim~2 gives a rainbow cycle on
$V\setminus\{v_2\}$, a contradiction. The claim follows.
\end{proof}
	
\subsection{The auxiliary digraph and the degree-sum bound}

\begin{proof}[Proof of Theorem~\ref{thm:structural}]
For the fixed rainbow $(n-1)$-cycle $C$ and its omitted vertex $u$,
\eqref{eq:low-distance} holds, $\delta(u)\ge n/2$, and every vertex can be
omitted by a rainbow $(n-1)$-cycle.

	\medskip
	\noindent\emph{The common edge set is empty.}
	Suppose $xy\in E_{\cap}$. By Claim~4, we may assume
	$\delta(x)\ge n/2$. By Claim~6, there is a rainbow cycle
	\[
	C'=y_1y_2\cdots y_{n-1}y_1
	\]
	on $V\setminus\{x\}$ with $y_1=y$.
	
	Delete $y_{n-1}y_1$ and transfer its color to the common edge $xy_1$.
	The resulting Hamilton path
	\[
	xy_1y_2\cdots y_{n-1}
	\]
	contains a common edge, so Lemma~\ref{lem2.1} gives
	$\delta(y_{n-1})<n/2$. Deleting $y_1y_2$ instead and assigning
	its color to the common edge $xy_1$ gives the Hamilton path
	$xy_1y_{n-1}y_{n-2}\cdots y_2$; another application of
	Lemma~\ref{lem2.1} gives $\delta(y_2)<n/2$. But
	$y_2y_1y_{n-1}$ is a rainbow path of length two, contradicting
	\eqref{eq:low-distance}. Therefore
	\begin{equation}\label{eq:Ecap-empty}
		E_{\cap}=\varnothing.
	\end{equation}
	\medskip
	\noindent\emph{The degree-sum bound.}
	Retain the chosen rainbow $(n-1)$-cycle
	\[
	C=v_1v_2\cdots v_{n-1}v_1
	\]
	with omitted vertex $u$, and assume that $v_iv_{i+1}$ has color $i$
	for every $i\in[n-1]$. By the choice of $u$ and Claim~6,
\[
 \Delta=\delta(u)=\max_{v\in V}\delta(v).
\]
Put $d=d_{G_n}(u)\ge\Delta$ and $N=N_{G_n}(u)$.

For completeness, $\Delta\le n-3$. Indeed, $d\ge n/2$ ensures two
consecutive vertices $v_i,v_{i+1}\in N$. Neither $uv_i$ nor
$uv_{i+1}$ can lie in $G_i$, since either would insert $u$ into $C$
using colors $i,n$. Thus $d_{G_i}(u)\le n-3$.

	Define an auxiliary digraph $D$ on $V$ by putting an arc
	$v_i\to z$ whenever
	\[
	z\ne v_{i+1}
	\qquad\text{and}\qquad
	v_iz\in E(G_i),
	\]
	for $i\in[n-1]$. Thus an arc from $v_i$ records an alternative
	color-$i$ edge that can replace the cycle edge $v_iv_{i+1}$.
	
	By definition,
	\begin{equation}\label{eq:D1}
		d_D^+(u)=0.
	\end{equation}
	For every $i\in[n-1]$,
	\begin{equation}\label{eq:D2}
		d_D^+(v_i)
		=d_{G_i}(v_i)-1
		\ge\delta(v_i)-1.
	\end{equation}
	
	For the omitted vertex $u$, set
	\[
	I_1=\{i\in[n-1]:v_i\to u\in A(D)\},\qquad
	I_2=\{i\in[n-1]:uv_{i+1}\in E(G_n)\}.
	\]
	The sets $I_1$ and $I_2$ are disjoint: for
	$i\in I_1\cap I_2$, replacing $v_iv_{i+1}$ by
	$v_iu$ and $uv_{i+1}$, in colors $i$ and $n$, respectively,
	gives a rainbow Hamilton cycle. Hence
	\[
	d_D^-(u)+d_{G_n}(u)=|I_1|+|I_2|\le n-1,
	\]
	and therefore
	\begin{equation}\label{eq:D3}
		d_D^-(u)\le n-1-d.
	\end{equation}
	
	For $v_i\in V(C)$, first suppose that $v_i\in N_{G_n}(u)$. The path
	\[
	P=uv_iv_{i+1}\cdots v_{i-1},
	\]
	where $uv_i$ receives color $n$, is a rainbow Hamilton path missing
	color $i-1$. Define
	\[
	J_1=\{j\in[n-1]:uv_{j+1}\in E(G_{i-1})\},\qquad
	J_2=\{j\in[n-1]:v_j\in N_D^-(v_{i-1})\}.
	\]
	The sets $J_1$ and $J_2$ are disjoint. Otherwise, choose
	$j\in J_1\cap J_2$. Then
	\[
	uv_{j+1}\in E(G_{i-1}),
	\qquad
	v_jv_{i-1}\in E(G_j),
	\]
	where the second relation follows from the definition of $D$.
	Deleting the color-$j$ edge $v_jv_{j+1}$ and adding
	$v_jv_{i-1}$ in color $j$ and $uv_{j+1}$ in the missing color
	$i-1$ yields a rainbow Hamilton cycle, a contradiction.
	
	Also, $i-2\notin J_1\cup J_2$. Membership in $J_1$ would give
	$uv_{i-1}\in E(G_{i-1})$, which together with
	$uv_i\in E(G_n)$ gives a rainbow Hamilton cycle, while
	$i-2\notin J_2$ because $v_{i-2}v_{i-1}$ is precisely the cycle
	edge excluded from the definition of $D$. Thus
	\[
	|J_1|+|J_2|\le n-2.
	\]
	The cyclic map $j\mapsto v_{j+1}$ is a bijection from $[n-1]$
	onto $V(C)$, so $|J_1|=d_{G_{i-1}}(u)$. Likewise every arc entering
	$v_{i-1}$ has its tail on $C$ because $u$ has outdegree zero, and the
	map $j\mapsto v_j$ gives $|J_2|=d_D^-(v_{i-1})$. Hence
	\[
	|J_1|=d_{G_{i-1}}(u)\ge\delta(u),
	\qquad
	|J_2|=d_D^-(v_{i-1}),
	\]
	and therefore
	\[
	\delta(u)+d_D^-(v_{i-1})\le n-2.
	\]
	Retaining the actual color degree of $u$ gives
\begin{equation}\label{eq:D4}
 d_D^-(v_{i-1})\le n-2-d_{G_{i-1}}(u)
 \qquad\text{whenever }v_i\in N.
\end{equation}

	Now suppose that $v_i\notin N_{G_n}(u)$. By Claim~5,
	$\delta(u)\ge n/2$, and hence
	\[
	d_{G_{i-1}}(u)+d_{G_n}(u)\ge2\delta(u)\ge n.
	\]
	Set
	\[
	A=\{j\in[n-1]:v_{j-1}\in N_{G_{i-1}}(u)\},\qquad
	B=\{j\in[n-1]:v_j\in N_{G_n}(u)\}.
	\]
	Since $|A|+|B|\ge n$ and $A,B\subseteq[n-1]$, the intersection
	$A\cap B$ is nonempty; choose $j\in A\cap B$. Then
	\[
	uv_{j-1}\in E(G_{i-1}),
	\qquad
	uv_j\in E(G_n).
	\]
	Moreover, $j\ne i$ because $v_i\notin N_{G_n}(u)$.
	
	Thus
	\[
	P=
	v_iv_{i+1}\cdots v_{j-1}
	u
	v_jv_{j+1}\cdots v_{i-1}
	\]
	is a rainbow Hamilton path, where $uv_{j-1}$ has color $i-1$ and
	$uv_j$ has color $n$. Hence $P$ misses color $j-1$.
	
	Write
	\[
	P=x_1x_2\cdots x_n,
	\qquad
	x_1=v_i,\quad x_n=v_{i-1},
	\]
	and define
	\[
	\begin{gathered}
 K_1=\{k\in[n-2]:j-1\in c(x_1x_{k+1})\},\\
 K_2=\{k\in[n-2]:x_k\in N_D^-(v_{i-1})\}.
\end{gathered}
	\]
	We have
	\[
	|K_1|+|K_2|\le n-1.
	\]
	Indeed, if the inequality failed, then $K_1,K_2\subseteq[n-2]$ would give
	\[
	|K_1\cap K_2|\ge2.
	\]
	Now $x_k=u$ is impossible for $k\in K_2$ by
	\eqref{eq:D1}, and at most one index corresponds to
	$x_k=v_{j-1}$. Hence we may choose
	$k\in K_1\cap K_2$ such that
	\[
	x_k\ne u,v_{j-1}.
	\]
	
	Write $x_k=v_r$. Since $x_k$ is neither $u$ nor $v_{j-1}$, the
	edge following $x_k$ on $P$ is the original cycle edge
	\[
	x_kx_{k+1}=v_rv_{r+1},
	\]
	and hence has color $r$. On the other hand, $k\in K_2$ implies that
	$v_r\to v_{i-1}$ is an arc of $D$, so
	\[
	x_kx_n=v_rv_{i-1}\in E(G_r).
	\]
	Also, $k\in K_1$ gives
	$x_1x_{k+1}\in E(G_{j-1})$. Therefore, deleting
	$x_kx_{k+1}$ and adding $x_kx_n$ in color $r$ and
	$x_1x_{k+1}$ in the missing color $j-1$ gives the rainbow Hamilton
	cycle
	\[
	x_1x_2\cdots x_kx_nx_{n-1}\cdots x_{k+1}x_1,
	\]
	a contradiction. Hence $|K_1|+|K_2|\le n-1$.
	
	Furthermore,
	\[
	j-1\notin c(x_1x_n),
	\]
	since otherwise $P$ could be closed into a rainbow Hamilton cycle.
	Moreover, $x_{n-1}\notin N_D^-(v_{i-1})$. If $j\ne i-1$, then
	$x_{n-1}=v_{i-2}$ and $v_{i-2}v_{i-1}$ is the cycle edge excluded
	from the definition of $D$; if $j=i-1$, then $x_{n-1}=u$, which has
	outdegree zero in $D$ by \eqref{eq:D1}. Consequently, every $G_{j-1}$-neighbor of $x_1=v_i$ occurs among
	$x_2,\ldots,x_{n-1}$, which are exactly the positions indexed by
	$K_1$; hence $|K_1|=d_{G_{j-1}}(v_i)$. Similarly, all tails of arcs
	entering $v_{i-1}$ occur among $x_1,\ldots,x_{n-1}$, and the only
	position omitted from $K_2$, namely $x_{n-1}$, is not such a tail.
	Thus
	\[
	|K_1|=d_{G_{j-1}}(v_i)\ge\delta(v_i),
	\qquad
	|K_2|=d_D^-(v_{i-1}),
	\]
	and hence
	\begin{equation}\label{eq:D5}
		d_D^-(v_{i-1})
		\le n-1-\delta(v_i)
		\qquad
		\text{whenever }v_i\notin N_{G_n}(u).
	\end{equation}

We now count the arcs of $D-u$ in two ways, keeping the discarded
surpluses. Define
\[
\begin{gathered}
 R=\sum_{i=1}^{n-1}\bigl(d_{G_i}(v_i)-\delta(v_i)\bigr),\qquad
 E=\sum_{v_i\in N}\bigl(d_{G_{i-1}}(u)-\Delta\bigr),\\
 T=\sum_{v_i\in N}\bigl(\Delta-\delta(v_i)\bigr).
\end{gathered}
\]
All three quantities are nonnegative. Since no arc leaves $u$, deleting
$u$ removes exactly the arcs entering it. By \eqref{eq:D2} and
\eqref{eq:D3},
\begin{align}
 |A(D-u)|
 &=\sum_{i=1}^{n-1}\bigl(d_{G_i}(v_i)-1\bigr)-d_D^-(u)\notag\\
 &\ge S-\Delta-(n-1)+R-(n-1-d)\notag\\
 &=S-2(n-1)+(d-\Delta)+R.\label{eq:D6}
\end{align}
On the other hand, summing \eqref{eq:D4} over $N$ and
\eqref{eq:D5} over its complement on the cycle gives
\begin{align}
 |A(D-u)|
 &\le\sum_{v_i\in N}\bigl(n-2-d_{G_{i-1}}(u)\bigr)
    +\sum_{v_i\notin N}\bigl(n-1-\delta(v_i)\bigr)\notag\\
 &=(n-1)^2-S-(d-\Delta)-T-E.\label{eq:D7}
\end{align}
Consequently the fixed cycle provides the more precise certificate
\begin{equation}\label{eq:full-slack-bound}
 2S+2(d-\Delta)+T+R+E\le n^2-1.
\end{equation}
This statement retains both color-degree information and the degree
gaps between $u$ and its missing-layer neighbors.

To obtain a bound depending only on the system-degree sequence, recall
that $|N|=d\ge\Delta$ and $u\notin N$. Among all sets of $d$ other
vertices, the sum of the deficits $\Delta-\delta(v)$ is minimized by
the $d$ largest degrees excluding one occurrence of $\Delta$.
Hence
\[
 T\ge\sum_{j=2}^{d+1}(\Delta-a_j)
   \ge\sum_{j=2}^{\Delta+1}(\Delta-a_j)=\Gamma.
\]
Dropping only the remaining nonnegative terms in
\eqref{eq:full-slack-bound} yields
\[
 2S+\Gamma\le n^2-1.
\]
Together with \eqref{eq:Ecap-empty}, this proves
Theorem~\ref{thm:structural}.
\end{proof}

\subsection{Strict improvement over the aggregate criterion}

The correction is realized by graph systems that satisfy neither of the
previous aggregate-degree and common-edge sufficient conditions.

\begin{proposition}\label{prop:strict-family}
For every $n=2t\ge10$, there is a system of $n$ graphs, each
$2$-connected, satisfying \textup{(RF)}, such that
\[
 E_{\cap}=\varnothing,\qquad
 S=2t^2-\lfloor t/2\rfloor,\qquad \Gamma=t.
\]
In particular, Corollary~\ref{cor:degree-sequence} applies, whereas
Corollaries~\ref{cor:total-degree-ham} and \ref{cor:common-edge-ham}
do not. The required aggregate degree is lower by
$\lfloor n/4\rfloor$ than the earlier scalar threshold on this family.
\end{proposition}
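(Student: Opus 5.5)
The plan is to reverse-engineer the system degree sequence from the two target values $S$ and $\Gamma$, and then realize it by a ``base graph minus one perfect matching per layer'' construction, so that $E_{\cap}=\varnothing$ comes for free.

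\emph{Choosing the degree sequence.} $\Delta$ cannot equal $t$. Indeed, if $\Delta=t$ then $S\le 2t^2$, and the total deficit $\sum_v(t-\delta(v))=\lfloor t/2\rfloor$ would have to dominate $\Gamma=t$, which is impossible. So I take $\Delta=t+1$ and use the target sequence
\[
 a_1=a_2=t+1,\qquad a_3=\cdots=a_{n-2}=t,\qquad a_{n-1}+a_n=2t-\lfloor t/2\rfloor-2 .
\]
The two smallest entries $\delta(w),\delta(w')$ should be as balanced as possible; for $t\ge5$ both are at least $3$. Since $n-2\ge t+2$, we get $\Gamma=\sum_{j=2}^{t+2}(t+1-a_j)=0+t\cdot1=t$. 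We also get $S=2(t+1)+(2t-4)t+2t-\lfloor t/2\rfloor-2=2t^2-\lfloor t/2\rfloor$. This gives $2S+\Gamma=4t^2+t-2\lfloor t/2\rfloor\ge n^2$, while $S$ is exactly $\lfloor n/4\rfloor$ below $\lceil n^2/2\rceil=2t^2$. Thus Corollary~\ref{cor:degree-sequence} applies and Corollary~\ref{cor:total-degree-ham} does not. The key observation for \textup{(RF)} is that a vertex of system degree exactly $t=n/2$ is \emph{not} low. So the only low vertices are $w,w'$, and \textup{(RF)} reduces to the single requirement $d^c(w,w')>2$. Given $E_{\cap}=\varnothing$, this means $w,w'$ are nonadjacent in every layer and have no common neighbour in $\bigcup_iG_i$.

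\emph{Construction.} Fix disjoint sets $X,X'\subseteq V\setminus\{w,w'\}$ with $|X|=\delta(w)+1$ and $|X'|=\delta(w')+1$. This is possible because $|X|+|X'|=2t-\lfloor t/2\rfloor\le 2t-2$ for $t\ge5$. Build a $2$-connected base graph $F$ in which $N_F(w)=X$, $N_F(w')=X'$, every other vertex has $F$-degree $a_v+1$, $w$ and $w'$ are nonadjacent, and the rest of $F$ is dense enough to stay $2$-connected after deleting one perfect matching. The intended form of $F$ is a near-complete graph on $V\setminus\{w,w'\}$ with a suitable regular-type graph removed to fix the degrees. Then choose perfect matchings $M_1,\dots,M_n$ of $F$ whose union is $E(F)$, and set $G_i=F-M_i$. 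Every vertex loses exactly one edge in every layer, so $d_{G_i}(v)=d_F(v)-1=a_v$ for all $i$, and hence $\delta(v)=a_v$. Every edge is missing from some layer, so $E_{\cap}=\varnothing$, and Corollary~\ref{cor:common-edge-ham} does not apply. Since $\bigcup_iG_i\subseteq F$, the neighbourhoods of $w$ and $w'$ in every layer lie in the disjoint sets $X$ and $X'$, with $ww'\notin F$. Hence no rainbow path of length at most two joins them, and \textup{(RF)} holds.

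\emph{Main obstacle.} The delicate step is designing $F$ concretely for all even $n\ge10$ together with its perfect-matching cover. We need an $F$ with the prescribed near-regular degrees (two vertices of degree $t+2$, the others $t+1$, and $w,w'$ of degrees $|X|,|X'|$), covered by at most $n$ perfect matchings, such that each $F-M_i$ is $2$-connected. My first attempt would be to take $X$ and $X'$ to be ``halves'' of a circulant-type graph on $V\setminus\{w,w'\}$ and use rotations of one fixed perfect matching that passes through $w$ and $w'$. Every edge at $w$ lies in one of these matchings, so the edges at $w$ are each removed from some layer. If the matching count exceeds $n$, or the parity of the degrees obstructs perfect matchings, I would relax to layers $G_i=F-M_i$ with $M_i$ allowed to repeat, and adjust the two degree-$(t+1)$ vertices. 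Failing that, I would handle small $t$ (e.g.\ $t=5$) separately by an explicit system. $2$-connectivity should then follow because each layer has minimum degree at least $3$ and contains a dense $2$-connected core into which $X$ and $X'$ attach with at least two edges each.
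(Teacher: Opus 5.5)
\textbf{Gap: the system is never constructed, and the scheme you propose is impossible for half of all $t$.}

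Your degree-sequence bookkeeping is right and agrees with the paper's target: $a_1=a_2=t+1$, then $2t-4$ entries equal to $t$, then two low entries summing to $2t-2-\lfloor t/2\rfloor$. Your reduction of \textup{(RF)} to $d^c(w,w')>2$, using that degree exactly $t=n/2$ is not low, is also correct. But the proposition is an existence statement, and you never produce the system.

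The scheme you propose is parity-obstructed. With $G_i=F-M_i$ and each $M_i$ a perfect matching, every layer satisfies $d_{G_i}(v)=a_v$ for every $v$. The handshake lemma then forces $S=\sum_v a_v$ to be even. However, $S=2t^2-\lfloor t/2\rfloor$ is odd whenever $\lfloor t/2\rfloor$ is odd, that is, for $t\equiv 2,3\pmod 4$; already $n=12$ gives $S=69$. Equivalently, your base graph $F$ would have odd degree sum $S+n$.

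None of your fallbacks repairs this: repeating matchings, adjusting the degree-$(t+1)$ vertices, or treating small $t$ separately. The targets $S$ and $\Gamma$ are fixed, and the obstruction applies to any system in which some layer realizes all system degrees simultaneously. Even when $\lfloor t/2\rfloor$ is even, three things are only sketched: the existence of $F$, a cover of $E(F)$ by $n$ perfect matchings, and the $2$-connectivity of each $F-M_i$.

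\textbf{Missing idea: let layer degrees exceed system degrees, in a rotating pattern.} This is what the paper does.

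\emph{Core.} Take $K_U$ on $2t-2$ vertices. In layer $i$, remove the union of $t-4$ consecutive one-factors of a fixed $1$-factorization, with the window shifted with $i$ so that every edge of $K_U$ is removed in some layer. Also remove a perfect matching of $W=U\setminus\{h_1,h_2\}$, which exists by Dirac's theorem because the remaining graph on $W$ has minimum degree at least $t-1$. This gives core degrees $t+1$ at $h_1,h_2$ and $t$ elsewhere, in every layer.

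\emph{Attachments.} Attach $x$ to all but one vertex of a set $A$ with $|A|=p+1$, and $y$ to all but one vertex of a disjoint set $B$ with $|B|=q+1$. The omitted vertex rotates with $i$.

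\emph{Why this works.} Then $x$ and $y$ have layer degree exactly $p$ and $q$ in every layer. Each vertex of $A\cup B$ attains its system degree only in the layers where it is the omitted neighbour, so layer degree sums are no longer tied to the parity of $S$. Every attachment edge is missing from some layer, so $E_{\cap}=\varnothing$. The union neighbourhoods of $x$ and $y$ lie in the disjoint sets $A$ and $B$, so $d^c(x,y)>2$. Finally, $2$-connectivity follows because deleting any vertex of the core leaves minimum degree at least $t-1$ on $2t-3$ vertices, which forces connectivity, and $x,y$ each attach with at least two edges.
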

\begin{proof}
Put $g=\lfloor t/2\rfloor$, $b=2t-2-g$, and
\[
 p=\lfloor b/2\rfloor,\qquad q=\lceil b/2\rceil.
\]
Then $2\le p\le q\le t-2$ and $p+q+2=2t-g\le2t-2$.
Let $U$ be a set of $2t-2$ vertices, with distinguished vertices
$h_1,h_2$, and let $M_0,\ldots,M_{2t-4}$ be a $1$-factorization of
$K_U$. Explicitly, take $U=\{\infty\}\cup\mathbb Z_{2t-3}$ and
\[
 M_r=\{\{\infty,r\}\}\cup
     \{\{r+j,r-j\}:1\le j\le t-2\},
\]
with arithmetic modulo $2t-3$; take $h_1=\infty$ and $h_2=0$.
For each color $i\in[2t]$, put $r_i=i-1$ modulo $2t-3$ and let
\[
 F_i^0=M_{r_i}\cup M_{r_i+1}\cup\cdots\cup M_{r_i+t-5}.
\]
This is $(t-4)$-regular, and $\bigcup_iF_i^0=E(K_U)$, since every
factor occurs as the first factor of some $F_i^0$.

Write $W=U\setminus\{h_1,h_2\}$. The graph $K_W-F_i^0[W]$ has
order $2t-4$ and minimum degree at least $t-1$, so it has a perfect
matching $Q_i$, for example by the classical Dirac theorem.
Set $F_i=F_i^0\cup Q_i$ and $H_i=K_U-F_i$. Then
\[
 d_{H_i}(h_1)=d_{H_i}(h_2)=t+1,\qquad
 d_{H_i}(w)=t\quad(w\in W).
\]
Moreover, $\bigcap_iE(H_i)=\varnothing$.

Choose disjoint sets $A,B\subseteq U$ of sizes $p+1,q+1$, and list
them as $A=\{\alpha_0,\ldots,\alpha_p\}$ and
$B=\{\beta_0,\ldots,\beta_q\}$. Add vertices $x,y$.
The member $G_i$ consists of $H_i$ together with the edges
\[
 x\alpha_j\quad(j\ne i-1\pmod{p+1}),\qquad
 y\beta_j\quad(j\ne i-1\pmod{q+1});
\]
no other edges are added. Every possible attachment edge is absent
in at least one member, because $2t\ge p+1,q+1$.
Consequently $E_{\cap}=\varnothing$ and the system degrees are
\[
 t+1,\ t+1,\ \underbrace{t,\ldots,t}_{2t-4\text{ entries}},\ p,\ q.
\]
Each $H_i$ is $2$-connected: after one vertex is deleted, its minimum
degree is at least $t-1$ on $2t-3$ vertices, which forces connectivity.
Adding $x,y$ with at least two neighbors each preserves
$2$-connectivity. The only vertices below $n/2=t$ are $x,y$.
Their union neighborhoods lie in the disjoint sets $A,B$, so they
have rainbow distance greater than two. Thus \textup{(RF)} holds.

Finally,
\[
 S=2(t+1)+(2t-4)t+p+q=2t^2-g,
 \qquad \Delta=t+1,
\]
and the top $\Delta+1$ degrees consist of the two copies of $t+1$
and $t$ copies of $t$. Hence $\Gamma=t$, and
\[
 2S+\Gamma=4t^2+t-2\lfloor t/2\rfloor\ge n^2.
\]
Indeed $S=\lceil(n^2-\Gamma)/2\rceil$, so the new sufficient
inequality is met at its integer threshold. The original scalar
criterion requires $S\ge2t^2$, which fails.
\end{proof}

\section{The local Ore theorem}\label{sec:local-ore-proof}

\begin{proof}[Proof of Theorem~\ref{thm:local-ore}]
	Suppose for a contradiction that $\G$ has no rainbow Hamilton cycle. Since
	\[
	\delta(u),\delta(v)<\frac n2
	\quad\Longrightarrow\quad
	\delta(u)+\delta(v)<n,
	\]
	the hypothesis of Theorem~\ref{thm:local-ore} implies (RF). Hence
	Theorem~\ref{thm:structural} gives
	\begin{equation}\label{eq:structural-bound}
		E_{\cap}=\varnothing
		\qquad\text{and}\qquad
		\sum_{v\in V}\delta(v)\le\frac{n^2-1}{2}.
	\end{equation}
	
	By Theorem~\ref{thm2.4}, $\G$ contains a rainbow $(n-1)$-cycle
	\[
	C=v_1v_2\cdots v_{n-1}v_1.
	\]
	Let $v_n$ be the remaining vertex. Globally permute the member labels so
	that $C$ uses colors $1,\ldots,n-1$. Since $G_n$
	is connected,
	$v_n$ has a $G_n$-neighbor on $C$; by cyclic reindexing, assume
	\[
	v_1v_n\in E(G_n).
	\]
	Set $v_{n+1}=v_1$. Then
	\begin{equation}\label{eq:local-dist-cycle}
		d^c(v_i,v_{i+1})\le2
		\qquad(i\in[n]).
	\end{equation}
	For $1\le i\le n-2$, this follows directly from the cycle edges. For
	$i=n-1$, the path
	\[
	v_{n-1}v_1v_n
	\]
	is rainbow; for $i=n$, the vertices $v_n$ and $v_1$ are adjacent in
	$G_n$.
	
	Since $E_{\cap}=\varnothing$, the hypothesis of
	Theorem~\ref{thm:local-ore} and \eqref{eq:local-dist-cycle} imply
	\[
	\delta(v_i)+\delta(v_{i+1})\ge n
	\qquad(i\in[n]).
	\]
	Summing these $n$ inequalities gives
	\[
	2\sum_{v\in V}\delta(v)
	=
	\sum_{i=1}^n
	\bigl(\delta(v_i)+\delta(v_{i+1})\bigr)
	\ge n^2,
	\]
	contradicting \eqref{eq:structural-bound}. The proof is complete.
\end{proof}

\section{Concluding remarks and open problems}\label{sec:concluding}

Theorem~\ref{thm:structural} isolates the obstruction left by the local
Fan-type condition. Under \textup{(RF)}, a non-Hamiltonian system must have no
common edge and must lie below the aggregate-degree threshold in
Corollary~\ref{cor:total-degree-ham}. It remains open whether both restrictions
can occur simultaneously when every member is $2$-connected; equivalently,
whether \textup{(RF)} alone already forces a rainbow Hamilton cycle.

The degree-sequence correction in \eqref{eq:gamma-definition} gives a
strictly stronger sufficient inequality whenever the top $\Delta+1$
system degrees are not equal. The finer cycle certificate
\eqref{eq:full-slack-bound} also records color-degree surpluses.
Neither inequality is asserted to be optimal. In particular, no
non-Hamiltonian example satisfying \textup{(RF)} is supplied, so the
remaining obstruction should not be treated as a realized extremal family.

The critical Ore results have a different status. At threshold $n-1$,
both the general long-cycle conclusion and prescribed-set closure are
proved. Even-order complete bipartite examples prevent lowering the
local threshold for the long-cycle conclusion uniformly over all orders.
They do not establish endpoint sharpness while retaining
\textup{(LO$^-$)}. Odd-order complete bipartite
examples prevent upgrading the long-cycle conclusion to Hamiltonicity.
The optimal parity-specific refinements and a structural classification
of non-Hamiltonian systems under \textup{(LO$^-$)} remain open here.

The common-edge requirement also deserves attention. It is not merely a
technical device: a common edge can absorb any color released by a
rotation, which is the key source of recoloring flexibility in our proofs. It
would be interesting to replace this requirement by a weaker local condition
that retains enough flexibility for rotation--extension while still reducing
to the classical Fan or Ore condition in diagonal systems.

\section*{Declarations}
\paragraph{Funding.}
This work was supported by the National Natural Science Foundation of China
(Grant No.~12171088).
\paragraph{AI-assisted tools.}
ChatGPT (OpenAI) was used during manuscript revision to assist with
manuscript organization, language and LaTeX editing, suggestions for
mathematical arguments and proof checking, and drafting auxiliary code for
finite checks of selected examples. AI assistance does not constitute
independent verification of the mathematical results. The authors assume
responsibility for all content.
\paragraph{Data availability.}
Data sharing is not applicable to this article because no datasets were
generated or analyzed in the course of this study.

\end{document}